\newcommand\ifhidecomments\iftrue
\newcommand\RR{\mathbb{R}}
\newcommand\CC{\mathbb{C}}
\newcommand\EE{\mathbb{E}}
\newcommand\NN{\mathbb{N}}
\newcommand\cN{\mathcal{N}}
\newcommand\relu{{\sigma}}%_{\operatorname{R}}}}
\newcommand\opn[1]{\operatorname{#1}}
\newcommand\AS{\mathcal{A}}
\newcommand\AP{\mathcal{P}}%_\wedge}
\newcommand\sub[1]{_{\opn{#1}}}
\newcommand\submax{\sub{max}}
\newcommand\rank{\opn{rank}}
\newcommand\softplus{\opn{softplus}}
\newcommand\activation{\sigma}
\newcommand\texttype[1]{\mathrm{#1}}
\newcommand\entwise{\odot} % entrywise operations
\newcommand\entp{\entwise} % entrywise product
\newcommand\Lt{\mathcal{L}^2}
\newcommand\Ltd{\Lt(\RR^d;\Xdistr)}
\newcommand\Ltnd{\Lt(\RR^{nd};\Xdistr_n)}
\newcommand\ii{i}
\newcommand\rhoB{\rho}
\newcommand\Bsp{\mathfrak B}
\newcommand\ABsp{\mathfrak A}
\newcommand\Bnorm[1]{\|#1\|_{\Bsp}}
\newcommand\ABnorm[1]{\|#1\|_{\AS}}
\newcommand\PBnorm[1]{\|#1\|_{\AP}}
\newcommand\Bnormti[1]{\|#1\|_{\Bsp'}}
\newtheorem{theorem}{Theorem}
\newtheorem*{theorem*}{Theorem}
\newtheorem*{corollary*}{Corollary}
\newtheorem{lemma}[theorem]{Lemma}
\newtheorem{corollary}[theorem]{Corollary}
\newtheorem*{proposition*}{Proposition}
\newtheorem{definition}[theorem]{Definition}
\newtheorem{remark}[theorem]{Remark}
\renewcommand\i{i}
\renewcommand\v{\mathbf{v}}
\newcommand\w{\mathbf{w}}
\newcommand\x{\mathbf{x}}
\newcommand\y{\mathbf{y}}
\renewcommand\a{\mathbf{a}}
\renewcommand\aa{\mathcal{E}}
\renewcommand\b{\mathbf{b}}
\renewcommand\iota{i}
\newcommand\sigmaa{\boldsymbol{\sigma}}
\newcommand\e{e}
\newcommand\ee{\mathbf{e}}
\newcommand\xdist{\nu}
\newcommand\Xdist{\nu}
\newcommand\Xdistr{\Xdist}
\newcommand\Xnorm[1]{\|#1\|_{\Xdistr}}
\newcommand\bracket[2]{\langle#1|#2\rangle_{\Xdistr}}
\newcommand\LP[2]{{#1}^{\texttype{LP}(#2)}}
\newcommand\HP[2]{{#1}^{\texttype{HP}(#2)}}
\newcommand\ASF{{\mathcal F_\wedge}}
\newcommand\diag{\operatorname{diag}}
\newcommand\NA[1]{}
\newcommand{\LL}[1]{}
\newenvironment{provisional}{\PackageError{no package}{unexpected provisional environment in final version}{resolve provisional section}}{}
\newcommand\shortprovisional[1]{\PackageError{no package}{unexpected provisional environment in final version}{resolve provisional section}}
\newcommand\NA[1]{\textcolor{red}{\hl{[\textbf{NA:} #1]}}}
\newcommand\LL[1]{\textcolor{blue}{\hl{[\textbf{LL:} #1]}}}
\newcommand\shortprovisional[1]{{\color{gray}#1}}
\begin{document}

\title{
Anti-symmetric Barron functions and their approximation with sums of determinants
}

\author[1,2]{Nilin Abrahamsen\corref{cor1}}
\ead{nilin@berkeley.edu}

\author[1,3]{Lin Lin}
\ead{linlin@math.berkeley.edu}

\cortext[cor1]{Corresponding author}

\address[1]{
Department of Mathematics,
University of California, Berkeley, CA 94720 USA
}
\address[2]{
The Simons Institute for the Theory of Computing,
Berkeley, CA 94720 USA
}
\address[3]{
Applied Mathematics and Computational Research Division, Lawrence Berkeley National Laboratory, Berkeley, CA 94720, USA
}

\begin{abstract}
A fundamental problem in quantum physics is to encode functions that are completely anti-symmetric under permutations of identical particles. The Barron space consists of  high-dimensional functions that can be parameterized by infinite neural networks with one hidden layer. By explicitly encoding the anti-symmetric structure, we prove that the anti-symmetric functions which belong to the Barron space can be efficiently approximated with sums of determinants. This yields a factorial improvement in complexity compared to the standard representation in the Barron space and provides a theoretical explanation for the effectiveness of determinant-based architectures in ab-initio quantum chemistry.

\end{abstract}

\maketitle

\section{Introduction}
To simulate a physical system it is essential to construct a model which respects the \emph{symmetries} of the real-world problem. A prominent example is when a function is defined on \emph{sets} of points \cite{zaheer_deep_2017,santoro_simple_2017,yarotsky_universal_2022}, in which case the function can be viewed as a \emph{permutation-invariant} function of an input vector. A symmetry need not mean that a function is invariant to transformations of its input; more generally it can map transformations of the input to transformations of the output through a group homomorphism. \emph{Equivariance} is one such example where a transformation of the input gives rise to the same transformation on the output. Another such symmetry is \emph{anti-symmetry} where a permutation of the input vector multiplies the output by the \emph{sign} of the permutation.

Accurate modeling of fermionic systems is one of the most challenging and interesting problems in science. For example, the solution of the Schr\"odinger equation underlies all chemical properties of a given atomic system. 
Due to the Pauli exclusion principle, the fermionic wavefunction is anti-symmetric with respect to particle exchange. 
When the number of fermions grows, effective parametrization of such  wavefunctions can become increasingly difficult for many systems of interest. 
Anti-symmetric functions also arise in other contexts in machine learning such as determinantal point processes \cite{kulesza_determinantal_2012} where they are used to ensure diverse samples.

In the past decade there has been an explosive growth of techniques using neural networks (NN) as universal function approximators. The practical applicability of NNs is brought about by new software tools, hardware optimizations, as well as improved algorithms. NN approximators also significantly broaden the parameterization class for anti-symmetric functions in quantum physics~\cite{LuoClark2019,HanZhangE2019,HermannSchaetzleNoe2020,PfauSpencerMatthewsEtAl2020,StokesMorenoPnevmatikakisEtAl2020,LinGoldshlagerLin_vmcnet}. The combination of NN with variational Monte Carlo (VMC) methods provides a new path towards a low-scaling, systematically improvable method to approach the exact solution.

Despite recent progresses it is unclear how to construct a universal NN representation of anti-symmetric functions that does not obviously suffer from the curse of the dimensionality~\cite{LuoClark2019,PfauSpencerMatthewsEtAl2020,Hutter2020,HanLiLinEtAl2019}.
%It is  worth noting that 
In the absence of symmetry constraints, very simple NN structures such as an NN with one hidden layer (sometimes also referred to as a ``two-layer'' NN) is already a universal function approximator~\cite{Cybenko1989,HornikStinchcombeWhite1989,Barron1993}.
Therefore in principle, explicitly antisymmetrizing a NN with one hidden layer can parameterize  universal anti-symmetric functions. 
One obvious drawback of this strategy is that the computational cost of the antisymmetrization step still increases factorially with respect to the system size. 
Nonetheless, such an explicitly anti-symmetrized NN structure has been recently studied in VMC calculations, which can yield effectively the exact ground state energy for small atoms and molecules~\cite{LinGoldshlagerLin_vmcnet}.

Conversely, determinant-based NN constructions of anti-symmetric functions can be evaluated efficiently. But their expressive power is unclear except in the setting of a combinatorially large number of determinants spanning the entire anti-symmetric subspace. It is therefore prudent to know if the efficient determinant-based constructions are able to capture the a priori intractable class of explicitly anti-symmetrized neural networks. 

\subsection{Contribution}
The \emph{Barron space}, defined in \cite{e_barron_2022} based on the seminal work of Barron \cite{Barron1993}, characterizes functions that can be approximated by an infinite neural network with one hidden layer. We consider the subspace of antisymmetric functions in the Barron space and prove (\cref{mainthm}):
\begin{enumerate}

\item A function in the anti-symmetric Barron space can be efficiently approximated using a sum of determinants.

\item The theoretical error bound factorially improves the error estimate in the standard Barron space.

\end{enumerate}
The Fourier transform is central to our analysis. 
This is because anti-symmetrizing a complex-valued plane wave gives rise to a determinant (called a Slater determinant). Each plane wave can the viewed a single hidden neuron with an exponential activation function

\subsection{Background and related works}

Consider a system of $n$ \emph{indistinguishable particles} in a $d$-dimensional space $\Omega\subset\RR^d$ ($d=1,2,3$), and let $N=nd$. The $n$-particle wave function is defined on inputs $\x=(x_1,\ldots,x_n)\in\Omega^n\subset\RR^N$ where each $x_i$ is in $\Omega\subset\RR^d$.
Indistinguishability means that  $\psi(x_1,\ldots,x_n)$ satisfies permutation symmetry of the norm $|\psi(x_1,\ldots,x_n)|$ under interchange of the $n$ inputs $x_i\in\Omega$. \emph{Fermions} are indistinguishable particles which satisfy the \emph{Pauli exclusion principle} and correspond to an \emph{anti-symmetric} wave function $\psi$. Anti-symmetry means that for a permutation $\pi\in S_n$, whose sign we denote by $(-1)^\pi$,
\[\pi(\psi)=(-1)^\pi\psi,\]where we have defined $\pi(\psi):\RR^{nd}\mapsto\CC$ by $\pi(\psi)(\x):=\psi(x_{\pi^{-1}(1)},\ldots,x_{\pi^{-1}(n)})$ for $x\in\RR^{nd}$. Let $\ASF_n$ denote the set of anti-symmetric functions $(\RR^d)^n\to\CC$.

In the machine learning literature there is a rich body of works related to permutation-invariant data, i.e., when the input data is a set \cite{zaheer_deep_2017,santoro_simple_2017,yarotsky_universal_2022,zweig_functional_nodate}. 
A widely used class of Ansatzes for anti-symmetric functions takes the form of a \textit{sum of Slater determinants}.  A Slater determinant, denoted $\phi_1\wedge\cdots\wedge\phi_n$, is constructed through $n$ \emph{orbitals}, i.e., functions $\phi_1,\ldots,\phi_n:\RR^d\to\CC$. The Slater determinant is the function defined by $(\phi_1\wedge\cdots\wedge \phi_n)(x_1,\ldots,x_n)=\det[E(\x)]$, where $(E(\x))_{ij}=\frac1{\sqrt{n!}}\phi_j(x_i),i,j=1,\ldots,n$. 

The representation of anti-symmetric functions is extensively studied in physics, but the literature on anti-symmetrized neural networks is sparse. 
Slater determinants can span a dense subset of the anti-symmetric space but the representation is very inefficient.  
Indeed, even in the case of a finite single-particle state space $|\Omega|=O(n)$ we would require $\binom{|\Omega|}{n}$ Slater determinants to span the anti-symmetric space. 
\cite{zweig_towards_2022} finds certain anti-symmetric functions that cannot be efficiently approximated using a simple sum of Slater determinants, but can be effectively expressed using a more complex Ansatz called the Slater-Jastrow form.

The FermiNet \cite{PfauSpencerMatthewsEtAl2020} and PauliNet \cite{HermannSchaetzleNoe2020} Ansatz have significantly expanded the representation power of the sum of determinants, by composing the orbitals with an equivariant mapping, and by parameterizing both using neural networks. The resulting structure can be expressed as a sum of generalized determinants
\begin{equation}
\psi(\x)=\sum_{k=1}^m \det(E_k(\x)),
\label{eq:composite}
\end{equation}
where $E_k:\RR^{nd}\to\RR^{n\times n}$ is equivariant, meaning that $E_k(\pi\x)=\pi(E_k(\x))$ and the permutation $\pi$ only exchanges rows of $E_k$. 

While the representation power of the Ansatz of the form \eqref{eq:composite} remains unclear, \cite{PfauSpencerMatthewsEtAl2020} provides an argument that with a sufficiently general mapping $E$, it is sufficient to choose $m=1$ to represent \textit{any} anti-symmetric function $\psi$. We recall their argument below.

%\noindent\textbf{\cite[Appendix B]{PfauSpencerMatthewsEtAl2020}: }
\noindent{Proof of universality from \cite{PfauSpencerMatthewsEtAl2020}:}
\textit{Introduce an ordering $\le$ on vectors in $\RR^d$, for example a dictionary ordering on the $d$ coordinates. Given $\x\in\RR^{nd}$ let $\pi$ be the permutation such that $\pi^{-1}\x$ is sorted. That is, $\x=(\tilde x_{\pi(1)},\ldots,\tilde x_{\pi(n)})$ for some sorted $\tilde x_1\le\ldots\le\tilde x_n$. Then for any anti-symmetric function $\psi$, it is sufficient to choose $E(\x)=\pi\tilde\y$ where $\tilde\y=\diag((-1)^\pi\psi(\x),1,\ldots,1)$.}

The argument above has the drawback that the mapping $E$ is highly discontinuous. In this work we therefore aim to approximate a more restricted class of anti-symmetric functions with an Ansatz which is continuous with respect to $\x$ and obtain a quantizative error bound relative to a known complexity measure.

\subsection{Setup}
Based on the seminal work of Barron \cite{Barron1993}, 
the \emph{Barron space} is defined in \cite{e_barron_2022} as those functions which can be approximated by a continuous generalization of neural networks with one hidden layer. 

\begin{definition}[Barron space and norm \cite{e_barron_2022}]
\label{def:Barronspace}
The \emph{Barron space} $\Bsp$ is the set of functions of the form
    \begin{equation}\label{Bspdef}f_\rhoB(\x)=\int a\relu(\w\cdot \x+b)d\rhoB(a,b,\w),\end{equation}
    where $\relu(x)=\max\{0,x\}$ is the ReLU activation function and $\rhoB$ is a finite measure. 
 \cite{e_barron_2022} defines \emph{Barron norm} of $f$ as
    \begin{align}\label{eq:Barronfn}
    \Bnormti{f}&=\inf\{\tilde\varphi(\rho)\:|\:f_\rho=f\},\text{ where}\\
    \tilde\varphi(\rho)&=\int|a|(\|\w\|_1+|b|)d\rhoB(a,b,\w).
    \end{align}
    We further define the {translation-invariant} Barron norm $\Bnorm{f}\le\Bnormti{f}$ by
    \begin{align}
    \Bnorm{f}&=\inf\{\varphi(\rho)\:|\:f_\rho=f\},\text{ where}\\
    \varphi(\rho)&=\int|a|\|\w\|_1d\rho(a,b,\w).\label{varphidef}
    \end{align}
\end{definition}
We call $\rho$ a \emph{Barron measure} for $f$ if $f_\rho=f$.
We state our upper bound in terms of the translation-invariant Barron norm $\Bnorm{f}$ which implies the same bound relative to the larger norm $\Bnormti{f}$.

\section{Main result}
\label{sec:results}
We state our approximation result in terms of an upper bound on the error in the norm on $L^2(\Omega^n,\xdist^{\otimes n})$ where $\xdist=\cN(0,I)$ is a standard Gaussian envelope function. The interpretation is that the actual wave function is a normalized function $\Psi\in L^2(\Omega^n)$ which we represent as $\Psi(\x)=\sqrt\Xdist(\x)\psi(\x)$ since $\Psi$ is localized. Writing $\tilde\Psi(\x)=\sqrt\Xdist(\x)\tilde\psi(x)$ we then have that $\Xnorm{\tilde\psi-\psi}=\|\tilde\Psi-\Psi\|$ is the standard $L^2$-distance between the wave functions $\tilde\Psi$ and $\Psi$.

%Let $\psi$ be an anti-symmetric function and $\tilde\psi$ an anti-symmetric approximation to $\psi$. 
\begin{theorem}
\label{mainthm}
    Let $\psi$ be an antisymmetric function which belongs to the Barron space and has translation-invariant Barron norm $\Bnorm\psi$ (\cref{def:Barronspace}). 
Then for each $m\in\NN$ there exists a linear combination $\psi_m=\frac1m\sum_{k=1}^m \mu_k\aa_{\w_k}$ of $m$ Slater determinants of the form $\aa_{\w}=e_{w_1}\wedge\cdots\wedge e_{w_n}$ with $e_w(x)=e^{\ii w\cdot x}$ such that 
    \begin{equation}
    \Xnorm{\psi_m-\psi}\le\frac{\Bnorm{\psi}}{\sqrt{n!}}\Big(\frac{C}{\sqrt m}+2^{-\Omega(n)}\Big).
    \end{equation}
     Here, $C=2\sqrt{d}/\pi$. In particular $\psi_m$ is of the form \cref{eq:composite} with $m$ determinants. 
\end{theorem}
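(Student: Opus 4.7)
The plan is to represent $\psi$ as an integral over Slater determinants via Fourier analysis, and then obtain the $m$-term approximation through a Maurey-type Monte Carlo sampling. The decisive factor $1/\sqrt{n!}$ arises because the antisymmetrization of a plane wave is $\mathcal{A}[e^{\ii\om\cdot\x}]=\aa_\om/\sqrt{n!}$, while plane-wave Slater determinants satisfy $\Xnorm{\aa_\om}^2=\det[e^{-\|w_i-w_j\|^2/2}]_{i,j}\le 1$ by Hadamard.

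\textbf{Step 1 (Fourier representation).} Fix a Barron measure $\rho$ for $\psi$ with $\varphi(\rho)=\Bnorm\psi$, so $\psi(\x)=\int a\,\relu(\w\cdot\x+b)\,d\rho(a,b,\w)$. Since the Fourier transform $\hat\relu$ is only a tempered distribution (with a non-integrable piece at $\xi=0$), decompose $\relu=\relu_{\mathrm F}+p$, where $p$ is a low-degree polynomial absorbing the low-frequency distributional content and $\relu_{\mathrm F}$ has an $L^1$ Fourier density. Pushing forward the absolutely continuous measure $\hat\relu_{\mathrm F}(\xi)\,e^{\ii\xi b}\,a\,d\xi\,d\rho(a,b,\w)$ under $(\xi,a,b,\w)\mapsto\om=\xi\w\in\RR^{nd}$ yields a complex measure $\tau$ such that
\begin{equation*}
\psi(\x)=\int_{\RR^{nd}}e^{\ii\om\cdot\x}\,d\tau(\om)+R(\x),
\end{equation*}
where $R$ is the polynomial residue from $p(\w\cdot\x+b)$. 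The key estimate $\|\tau\|\le C\,\Bnorm\psi$ with $C=2\sqrt d/\pi$ follows from explicit evaluation of $\int_\RR|\hat\relu_{\mathrm F}(\xi)|\,|\xi|\,d\xi$ together with the norm inequality $\|\w\|_\infty\le\|\w\|_1$ on each $d$-block of $\w$.

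\textbf{Step 2 (antisymmetrization).} Apply $\mathcal{A}$ to the representation from Step~1. Since $\psi=\mathcal{A}[\psi]$ and $\mathcal{A}[e^{\ii\om\cdot\x}]=\aa_\om/\sqrt{n!}$ (Leibniz expansion of the determinant),
\begin{equation*}
\psi(\x)=\frac{1}{\sqrt{n!}}\int \aa_\om(\x)\,d\tau(\om)+\mathcal{A}[R](\x).
\end{equation*}
Show $\Xnorm{\mathcal{A}[R]}\le \Bnorm\psi\cdot 2^{-\Omega(n)}/\sqrt{n!}$. The remainder $R$ is an integral of ridge polynomials of bounded degree, so its Hermite expansion lies in low-degree subspaces; antisymmetric functions occupy only an exponentially small fraction of each such subspace relative to its dimension, furnishing the $2^{-\Omega(n)}$ suppression after antisymmetrization.

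\textbf{Step 3 (Monte Carlo).} Write $d\tau=g\,d|\tau|$ with $|g|=1$ and draw $\om_1,\ldots,\om_m$ i.i.d.\ from $|\tau|/\|\tau\|$. Set $\psi_m(\x)=\frac{1}{m}\sum_{k=1}^m\mu_k\,\aa_{\om_k}(\x)$ with $\mu_k=\|\tau\|\,g(\om_k)/\sqrt{n!}$. Then $\EE[\psi_m]=\tfrac{1}{\sqrt{n!}}\!\int\!\aa_\om\,d\tau$ and, using $\sup_\om\Xnorm{\aa_\om}\le 1$,
\begin{equation*}
\EE\,\Xnorm{\psi_m-\tfrac{1}{\sqrt{n!}}\!\int\!\aa_\om\,d\tau}^2\le \frac{\|\tau\|^2}{m\cdot n!}\le \frac{C^2\,\Bnorm\psi^2}{m\cdot n!}.
\end{equation*}
Selecting a realization meeting this expectation and combining with Step~2 yields the theorem.

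\textbf{Main obstacle.} The heart of the argument is Step~1 together with Step~2's estimate on $\mathcal{A}[R]$: the distributional nature of $\hat\relu$ forces a polynomial subtraction with the tight constant $C=2\sqrt d/\pi$, and the antisymmetrization of this polynomial remainder must be shown to be exponentially small in $n$. The latter is a genuinely antisymmetric-combinatorial statement about low-degree polynomials on the Gaussian Fock space, rather than a purely Fourier-analytic one. The Monte Carlo step itself is, by comparison, a routine Maurey-type argument.
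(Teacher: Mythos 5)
Your overall architecture (Fourier expansion into plane waves, antisymmetrization into Slater determinants with the $1/\sqrt{n!}$ factor, the Hadamard bound $\Xnorm{\aa_\w}\le1$, Maurey sampling) matches the paper, and you correctly flag that the low-frequency content of $\widehat\relu$ is the crux. But Step~1 as stated does not exist, and this breaks Step~2 as well. The Fourier transform of ReLU is $\widehat\relu(\theta)=-\frac{1}{\sqrt{2\pi}\,\theta^2}+\sqrt{\pi/2}\,i\,\delta'(\theta)$; subtracting a polynomial $p$ from $\relu$ only removes distributions supported at $\theta=0$ and cannot alter the density $-1/(\sqrt{2\pi}\,\theta^2)$ on $\theta\neq0$, which is non-integrable at the origin. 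Hence there is no decomposition $\relu=\relu_{\mathrm F}+p$ with $\widehat{\relu_{\mathrm F}}\in L^1$ and $p$ polynomial, the pushforward measure $\tau$ does not have finite total variation, and the constant $C=2\sqrt d/\pi$ cannot be produced this way: $\int_{|\theta|\ge\gamma}\theta^{-2}\,d\theta=2/\gamma$ is finite only because of a cutoff $\gamma>0$, and the paper's value $C=\frac{1}{\pi\gamma}$ with $\gamma=\frac{1}{2\sqrt d}$ is tied to exactly such a cutoff.

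The missing idea is the paper's infrared cutoff combined with a quantitative decay bound for antisymmetrized plane waves. The paper keeps only the high-pass $\frac{1}{\sqrt{2\pi}}\int_{|\theta|>\gamma}\widehat\relu(\theta)e^{i\theta(\w\cdot\x+b)}\,d\theta$; the discarded low-pass equals a degree-one polynomial, which antisymmetrization kills \emph{exactly} by \cref{lem:polynomial} (not merely up to an ``exponentially small fraction of a Hermite subspace''), plus a non-polynomial remainder. The antisymmetrization of that remainder is controlled by proving $\Xnorm{\aa_{\theta\w}}^2\le\bigl(|\theta|/(2\gamma)\bigr)^{\Omega(n^{1+1/d})}$ for $\|\theta\w\|_\infty\le\gamma$ (\cref{prop:detbound}, via a low-rank-plus-small decomposition of the Gram matrix $(e^{w_i\cdot w_j})_{ij}$). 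This single estimate is what simultaneously makes the truncated integral a finite measure with total variation $\frac{2\sqrt d}{\pi}\varphi(\rho)$ and makes the infrared error $2^{-\Omega(n)}$; your proposal contains no substitute for it, and your Step~2 heuristic would in any case not apply to the non-polynomial part of the remainder. Your Step~3 is fine and coincides with the paper's application of Maurey's lemma.
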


%\subsubsection{Examples}
It was previously known that a Barron function $f$ can be approximated by \emph{finite} neural networks with one hidden layer of $m$ neurons up to error $\Bnorm{f}/\sqrt m$ \cite{Barron1993,e_barron_2022}. Our determinant-based approximation in \cref{mainthm} improves factorially on this estimate by a factor $1/\sqrt{n!}$ in the anti-symmetric setting.
This illustrates that the approximation is highly inefficient if the anti-symmetry condition is not explicitly built into the Ansatz.

\cref{mainthm} motivates the following definition:

\begin{definition}[Anti-symmetric Barron space and norm]\label{def:Anorm}
    The anti-symmetric Barron space is the subspace of the Barron space consisting of anti-symmetric functions, that is,
    \begin{equation}
    \ABsp=\Bsp\cap\ASF.
    \end{equation}
    For $\psi\in\ABsp$ we define its \emph{anti-symmetric Barron norm} as
    \begin{equation}\label{eq:Anorm}
    \ABnorm{\psi}=\frac{\Bnorm{\psi}}{\sqrt{n!}}.
    \end{equation}
\end{definition}

We can then restate our result as follows:
\begin{corollary}
\label{maincor1}
For any $\psi\in\ABsp$ and each $m\in\NN$ there exists a linear combination $\psi_m=\frac1m\sum_{k=1}^m \mu_k\aa_{\w_k}$ of $m$ Slater determinants $\aa_{\w}$ such that 
    \begin{equation}
    \Xnorm{\psi_m-\psi}\le\ABnorm{\psi}\Big(\frac{C}{\sqrt m}+2^{-\Omega(n)}\Big),
    \end{equation}
    where $\ABnorm{\psi}$ is given by \cref{eq:Anorm}.
\end{corollary}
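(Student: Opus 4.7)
The plan is to represent $\psi$ as a continuous superposition of Slater determinants of plane waves and then discretize by Monte Carlo sampling. The Fourier transform is the bridge: it turns the ReLU Barron integral into a plane-wave integral, and antisymmetrization converts a plane wave into a Slater determinant via the identity $A[e^{\ii\om\cdot\x}] = \tfrac{1}{\sqrt{n!}}\aa_\om$, where $A = \tfrac{1}{n!}\sum_\pi (-1)^\pi\pi$ is the antisymmetrization operator.

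First, I would transcribe the Barron representation $\psi(\x) = \int a\relu(\w\cdot\x + b)\,d\rhoB(a,b,\w)$ into a plane-wave representation $\psi(\x) = \int c(\om) e^{\ii\om\cdot\x}\,d\om$ with an $L^1$ bound $\|c\|_{L^1} \le C\Bnorm{\psi}$ for $C = 2\sqrt{d}/\pi$. This relies on the (distributional) Fourier transform of $\relu$ together with Fubini, carefully tracking the translation-invariant Barron norm. Since $\psi = A\psi$, applying $A$ under the integral produces the continuous Slater-determinant representation $\psi = \tfrac{1}{\sqrt{n!}}\int c(\om)\aa_\om\,d\om$. I would then invoke Maurey's lemma: sampling $\om_1,\ldots,\om_m$ i.i.d.\ from $p(\om) = |c(\om)|/\|c\|_{L^1}$ and setting $\mu_k = \|c\|_{L^1}\,c(\om_k)/(\sqrt{n!}\,|c(\om_k)|)$, the random $\psi_m = \tfrac{1}{m}\sum_{k=1}^m \mu_k\aa_{\om_k}$ is unbiased for $\psi$, and a standard second-moment estimate gives
\[\EE\,\Xnorm{\psi_m-\psi}^2 \le \frac{\|c\|_{L^1}^2}{m\,n!}\sup_\om\|\aa_\om\|_\cN^2 \le \frac{C^2\Bnorm{\psi}^2}{m\,n!},\]
the last inequality because $\|\aa_\om\|_\cN^2 = \det[e^{-|\om_j-\om_k|^2/2}]_{j,k}$ (the Gram matrix of plane waves under the Gaussian inner product) is bounded by $1$ via Hadamard's inequality for positive semidefinite matrices. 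A probabilistic argument then extracts a realization achieving $\Xnorm{\psi_m-\psi} \le C\Bnorm{\psi}/\sqrt{m\,n!}$, which matches the leading $C/\sqrt{m}$ term of the theorem after factoring out $\Bnorm{\psi}/\sqrt{n!}$.

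The main obstacle is the Fourier transcription step, which must deliver the sharp constant $C = 2\sqrt{d}/\pi$ and an honest $L^1$ bound on $c$ despite $\widehat{\relu}$ being only a tempered distribution, so the naive representation is singular and requires careful regularization. The translation invariance of the Barron norm in \cref{def:Barronspace}, which suppresses the bias $b$, should play an essential role in securing the correct constant. The additive $2^{-\Omega(n)}$ slack in the final estimate plausibly absorbs the residual error from this regularization (for instance, a tail cutoff of the Fourier density) rather than from the Slater-determinant norm bound, which is already sharp via Hadamard. The Monte Carlo step is then routine once the transcription is in place.
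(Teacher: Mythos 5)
Your overall architecture --- Fourier-decompose the ReLU ridge functions, anti-symmetrize under the integral to turn plane waves into Slater determinants via $\AS\ee_{\w}=\frac{1}{\sqrt{n!}}\aa_{\w}$, then discretize with Maurey's sampling lemma using the uniform bound $\Xnorm{\aa_{\w}}\le 1$ from Hadamard --- is exactly the paper's route, and the Maurey step and the Gram-determinant/Hadamard bound are correct as you state them. The gap is in the step you flag as ``the main obstacle'' and then defer: the claimed transcription $\psi=\int c(\om)e^{\ii\om\cdot\x}d\om$ with $\|c\|_{L^1}\le C\Bnorm{\psi}$ is not merely delicate, it is \emph{false} as stated. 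Since $\widehat\relu(\theta)=-1/(\sqrt{2\pi}\,\theta^2)$ away from the origin, the induced density on frequencies $\om=\theta\w$ has infinite mass near $\theta=0$; no regularization recovers a finite $L^1$ norm for the full plane-wave representation. The cutoff that saves the argument is an \emph{infrared} one (small $|\theta|$), not a ``tail cutoff,'' and the discarded low-frequency remainder is \emph{not} small as a function --- the paper emphasizes precisely this point. It only becomes small after anti-symmetrization, and proving that is the technical core of the paper, absent from your proposal.

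Concretely, two ingredients are needed and neither follows from Hadamard. First, the low-pass remainder $\relu-\HP\relu\gamma$ contains a degree-one polynomial $p_\gamma(y)=y/2+\frac{1}{\pi\gamma}$ whose anti-symmetrization vanishes identically for $n\ge 3$ (\cref{lem:polynomial}); without this the limit defining $\AS\sigmaa_{\w,b}$ as an integral over $|\theta|>\epsilon$, $\epsilon\to 0$, does not even converge. Second, one needs the fine-grained decay $\Xnorm{\aa_{\theta\w}}^2\le(|\theta|/(2\gamma))^{\Omega(n^{1+1/d})}$ for $|\theta|\le\gamma=\frac{1}{2\sqrt d}$ (\cref{prop:detbound} and \cref{Dttlowbound}), obtained by decomposing $(e^{w_i\cdot w_j})_{ij}$ into low-rank pieces and bounding eigenvalues; this is what makes $\int_{0<|\theta|<\gamma}|\widehat\relu(\theta)|\,\Xnorm{\aa_{\theta\w}}\,d\theta$ finite and in fact $2^{-\Omega(n^{1+1/d})}$, which is the source of the additive $2^{-\Omega(n)}$ term. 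With the integral restricted to $|\theta|\ge\gamma$ the surviving measure has total variation $\varphi(\rho)/(\pi\gamma)=C\varphi(\rho)$ with $C=2\sqrt d/\pi$, which is where the constant actually comes from (not from the translation-invariance of the norm per se, though dropping the bias term is what lets the cutoff depend only on $\|\w\|_\infty=1$ after canonical rescaling). Your Monte Carlo step then applies verbatim to the truncated measure.
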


\citet[Section IX]{Barron1993} provided a number of examples of Barron functions. 
Some care must be taken when restricting these to the anti-symmetric case. For example, any radial function $f(\x)=g(|\x|)$, or any function $f(\x)$ that is symmetric with respect to two of its coordinates vanishes after anti-symmetrization. This issue can be overcome by applying a translation by some vector $(x_1,\ldots,x_n)\in\RR^{nd}$ with distinct components $x_i\neq x_j$ before anti-symmetrizing. %Less obviously, anti-symmetrization of low-order polynomials also vanishes (\cref{lem:polynomial}). 
%However, we may compose a Barron function $f(\x)$ with an anisotropic linear transformation $\x\mapsto B\x$, and the resulting function $\AS f(B\x)$ may not vanish. 
More general anti-symmetric Barron functions can be constructed from anisotropic ridge functions $f(\x)=g(\a\cdot \x)$, anisotropic radial functions $f(\x)=g(|A\x|)$, and anisotropic integral representations $f(\x)=\int K(\a\cdot \x+b) d\rhoB(\a,b)$, to name a few.

A Slater determinant $\psi=\phi_1\wedge\cdots\wedge\phi_n$ can be written as $\psi=\sqrt{n!}\AP(\phi_1\otimes\cdots\otimes\phi_n)$ where $\AP$ is the projection onto the subspace of anti-symmetric functions. It therefore follows that \[\ABnorm{\phi_1\wedge\cdots\wedge\phi_n}=\Bnorm{\phi_1\otimes\cdots\otimes\phi_n}.\]

\section{Proof sketch}
We now outline the proof of \cref{mainthm}.
We can state the property of being antisymmetric as $\psi=\AP\psi$ where $\AP$ is the projection onto $\ASF$. Given a basis expansion $\psi=\int f_w d\mu(w)$ of $\psi\in\ASF$ we will take the projection inside the integral and write $\psi=\int\AP(f_w) d\mu(w)$. We therefore need a basis expansion such that:
\begin{enumerate}
\item\label{it:easyP} We can analyze anti-symmetric projection of $f_w$ and estimate the magnitude of $\AP(f_w)$. 
\item\label{it:B_to_exp} The expansion $\rho$ of a Barron function $\psi_\rho$ (\cref{def:Barronspace}) gives rise to a basis expansion $\mu(w)$ into functions $f_w$.
\end{enumerate}
We will show that the Fourier transform provides such an expansion.

The Fourier basis functions are complex plane waves $\x\mapsto e^{i\w\cdot\x}$.
To analyze their anti-symmetrization (\cref{it:easyP}), observe that they factor into a product:
\begin{equation}
\ee_{\w}(\x):=\e(\w\cdot\x)=e^{\i\w\cdot\x}=\prod_{j=1}^ne^{\i w_j\cdot x_j}.
\end{equation}
where $\i\in\CC$ is the complex unit. %As remarked in \cref{productslater} its anti-symmetrization can then be computed by a determinant formula:
The projection of $\ee_\w$ onto $\ASF$ can then be computed as a Slater determinant. Specifically, 
\begin{align}
\label{eq:Pew}
\AP\ee_{\w}=\AP(\otimes_{j=1}^n e_{w_j})=\frac1{\sqrt{n!}}\aa_\w,
\end{align}
where $\aa_\w=e_{w_1}\wedge\cdots\wedge e_{w_n}$ is as in \cref{mainthm}.
To obtain \cref{it:B_to_exp} we use the Barron expansion \cref{eq:Barronfn} of a function $\psi\in\Bsp$ to obtain its Fourier decomposition. Concretely, \cref{eq:Barronfn} given a decomposition of $\psi$ into ridge functions 
\begin{equation}\label{eq:ridgefn}
\sigmaa_{\w,b}(\x)=\sigma(\w\cdot\x+b).
\end{equation}
We then apply the one-dimensional Fourier decomposition of ReLU to decompose the ridge function into Fourier basis functions on $\RR^{nd}$.

The remainder of this sketch is a formal derivation which will require additional work in the following sections to be made rigorous. Assume that the activation function satisfies the Fourier inversion formula:
\begin{equation}
\label{formal0}
\sigma(y)=\frac1{\sqrt{2\pi}}\int \hat\sigma(\theta)e^{i\theta y}d\theta.
\end{equation}
This is not immediately well-defined in the case of ReLU because $\hat\sigma$ is not absolutely integrable. Substituting $\w\cdot\x+b$ into \cref{formal0} yields a decomposition of the ridge function $\sigmaa_{\w,b}$ on $\RR^{nd}$. By \cref{eq:Pew}, projecting this ridge function onto the anti-symmetric subspace yields
\begin{equation}
\label{ASridge}
\AP\sigmaa_{\w,b}=\frac1{\sqrt{2\pi n!}}\int\hat\sigma(\theta)e^{ib\theta}\aa_{\theta\w}d\theta.
\end{equation}
The anti-symmetric Barron function $\psi$ is of the form $\psi=f_\rho$ for some measure $\rho$. We antisymmetrize the integral representation of $f_\rho$ and expand the anti-symmetrized ridge function as in \cref{ASridge} to obtain
\begin{align}
f_\rho=\AP f_\rho
&=\int a\,\AP(\sigmaa_{\w,b})\,d\rho(a,b,\w)\\
&=\frac1{\sqrt{2\pi n!}}\iint a\hat\sigma(\theta)e^{ib\theta}\aa_{\theta\w}d\rho(a,b,\w),
\label{bigint}
\end{align}
which is an expansion as an integral over the basis functions $\aa_{\theta\w}$ against a complex measure $\mu$. By a standard sampling argument this can be approximated up to error $\|\mu\|/\sqrt m$ with a finite sum of $m$ terms, where $\|\mu\|$ is the total variation of the measure.

\begin{figure}
    \centering
    \includegraphics[width=.9\textwidth]{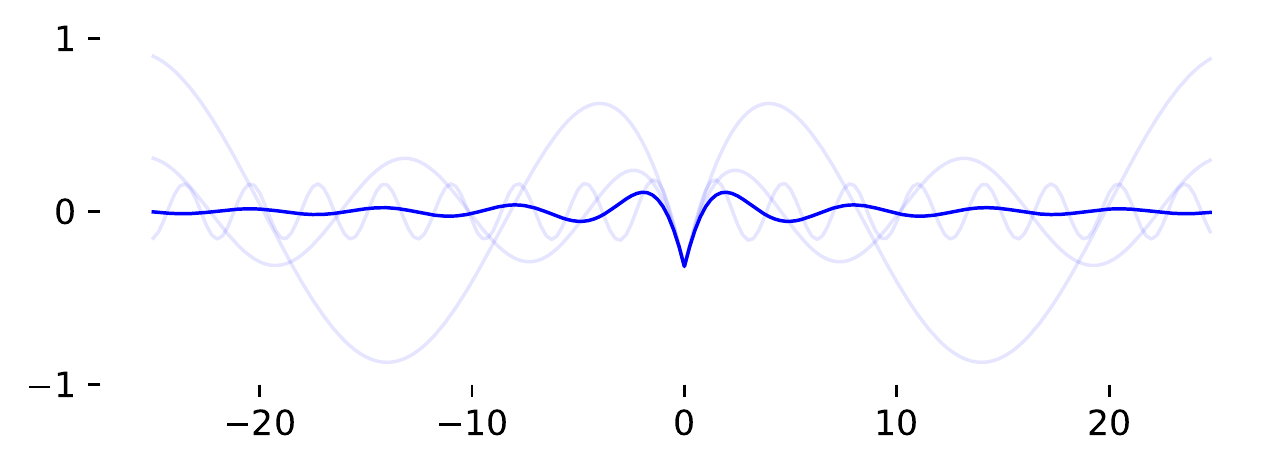}
    \caption{High-passed version $\HP\relu\gamma$ of ReLU (\cref{uvrelu}) for thresholds $\gamma=1\text{ (opaque)}$ and $\gamma=\frac14,\frac12,2$ (faint blue, with $\gamma=1/4$ being the large and slowly oscillating curve and $\gamma=2$ being the fast-oscillating curve). %\LL{ which line corresponds to which case? Also I am not sure  the meaning of the boldfont for 1}  
    We use anti-symmetrized ridge functions $\AP(\tilde\sigmaa_{\w,b})$ defined with the activation function $\tilde\sigma=\HP\relu\gamma$ to approximate anti-symmetrized ridge functions $\AP(\sigmaa_{\w,b})$ defined with the ReLU activation. %The same is not true for $\tilde\sigmaa_{\w,b}$ vs. $\sigmaa_{\w,b}$ without the anti-symmetrization. %\LL{ I am not sure what this sentence means} 
    }\label{fig:hp}
\end{figure}

The formal identities \cref{formal0}--\cref{bigint} do not directly apply for ReLU due to a divergence at $\theta\to0$. To overcome this subtlety we decompose the ReLU activation function into a \emph{high-passed} or \emph{ultraviolet} part and a \emph{low-passed} or \emph{infrared} remainder. 
Specifically, the ultraviolet part of the ReLU activation function is
\begin{equation}\label{uvrelu}
\HP\activation{\gamma}(y)=
|y|/2-\frac{\cos(\gamma y)}{\pi \gamma }-\frac{y\opn{Si}(\gamma y)}\pi,\end{equation}
where $\opn{Si}(y)=\int_0^y\frac{\sin s}sds$. 

We prove asymptotic bounds on $\Xnorm{\aa_\w}$ for small $\w$ which show that the contributions from the infrared remainder are exponentially small after anti-symmetrization. We can therefore truncate away the infra-red part to avoid the divergence at small $\theta$ at the cost of an exponentially small error term. This truncation is equivalent with replacing the ReLU activation by its high-passed part (\cref{fig:hp}). We emphasize that the magnitude of the discarded infrared remainder is not small as a one-dimensional function. Rather, its smoothness means as a multidimensional ridge function it is near-orthogonal to the antisymmetric subspace.

\section{A renormalized anti-symmetrization operator}
It is natural to renormalize the anti-symmetric projection to
\begin{equation}\label{ASdef}\AS f=\sqrt{n!}\:\AP f=\frac1{\sqrt{n!}}\sum_{\pi\in S_n}(-1)^\pi\pi(f).\end{equation}
In particular, if
$f=\phi_1\otimes\cdots\otimes \phi_n$ is a tensor product of single-particle orbitals, then $\psi(x)=(\AS \phi)(x)$ is the Slater determinant $\phi_1\wedge\cdots\wedge \phi_n$. The normalization in Eq. \eqref{ASdef} is such that if $\phi_i$ are orthonormal functions on $L^2(\Omega,\xdist)$ then $\psi=\AS f$ is normalized in $L^2(\Omega^{n},\xdist^{\otimes n})$. This follows from Pythagoras' theorem because orthogonality of $\psi_i$ implies that the $n!$ terms $\pi(f)$ in \cref{ASdef} are orthonormal.
\label{productslater}

With the renormalized antisymmetrization operator we have another equivalent definition of the antisymmetric Barron norm.
\begin{lemma}\label{ABequiv}
The \emph{anti-symmetric Barron space} is equal to $\ABsp=\AS\Bsp:=\{\AS f|f\in\Bsp\}$,
and the \emph{anti-symmetric Barron norm} of an anti-symmetric $\psi$ is
\begin{equation}\label{ABdef}
\begin{aligned}
\ABnorm{\psi}&=\inf\{\Bnorm{f}\:|\:\AS f=\psi\}.
%\\&=\inf\{\:\EE_{\rhoB}|a|(\|w\|_1+|b|)\:|\:\rhoB\text{ such that }\AS f_\rhoB=\psi\:\}.
\end{aligned}
\end{equation}
\end{lemma}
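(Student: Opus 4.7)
The plan is to establish two inclusions and two norm inequalities, all of which rest on one easily verified fact: the translation-invariant Barron norm is invariant under permutations of the input coordinates. Specifically, if $f=f_\rho$ with measure $\rho(a,b,\w)$, then $\pi(f)=f_{\pi_*\rho}$ where $\pi_*\rho$ is the pushforward under $(a,b,\w)\mapsto(a,b,\pi\w)$; since $\|\pi\w\|_1=\|\w\|_1$, the functional $\varphi$ in \cref{varphidef} is preserved, giving $\Bnorm{\pi(f)}=\Bnorm{f}$. In particular $\Bsp$ is closed under permutations and under finite linear combinations, so $\AS f\in\Bsp$ whenever $f\in\Bsp$.

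For the set equality $\ABsp=\AS\Bsp$, I would argue as follows. First, if $f\in\Bsp$ then $\AS f\in\Bsp$ by the above, and $\AS f\in\ASF$ by construction, so $\AS f\in\ABsp$. Conversely, if $\psi\in\ABsp$ then $\psi$ is anti-symmetric, so $\AP\psi=\psi$ and hence $\AS(\psi/\sqrt{n!})=\sqrt{n!}\,\AP(\psi/\sqrt{n!})=\AP\psi=\psi$; since $\psi/\sqrt{n!}\in\Bsp$ by homogeneity, $\psi\in\AS\Bsp$.

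For the norm identity, I would prove the two inequalities separately. The upper bound $\inf\{\Bnorm f:\AS f=\psi\}\le\ABnorm\psi$ follows by plugging in the specific choice $f=\psi/\sqrt{n!}$: the previous paragraph shows $\AS f=\psi$, and homogeneity of $\Bnorm{\cdot}$ yields $\Bnorm{f}=\Bnorm{\psi}/\sqrt{n!}=\ABnorm{\psi}$. For the lower bound, let $f\in\Bsp$ be any function with $\AS f=\psi$. Then by the definition \cref{ASdef},
\begin{equation}
\Bnorm{\psi}=\Bnorm{\AS f}=\Bnorm*{\tfrac1{\sqrt{n!}}\sum_{\pi\in S_n}(-1)^\pi\pi(f)}\le\tfrac1{\sqrt{n!}}\sum_{\pi\in S_n}\Bnorm{\pi(f)}=\sqrt{n!}\,\Bnorm{f},
\end{equation}
using the triangle inequality for $\Bnorm{\cdot}$ and the permutation-invariance established above. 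Dividing by $\sqrt{n!}$ gives $\ABnorm\psi\le\Bnorm f$, and taking the infimum over admissible $f$ completes the argument.

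The only genuine subtlety is the permutation-invariance of the Barron norm, and this is why the statement uses the translation-invariant variant $\Bnorm{\cdot}$ rather than $\Bnormti{\cdot}$: a coordinate permutation leaves $\|\w\|_1$ unchanged but does not interact with $|b|$, so the $\ell^1$ choice in $\varphi$ is the natural one. Everything else reduces to the vector-space and homogeneity properties of the Barron seminorm, together with the fact that $\AS$ acts as multiplication by $\sqrt{n!}$ on anti-symmetric functions.
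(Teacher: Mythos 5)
Your proof is correct and takes essentially the same route as the paper's: the paper's explicit symmetrized measure $\rho'=\frac1{n!}\sum_{\pi\in S_n}\rho_\pi$ is precisely your combination of permutation-invariance (via the pushforward measure) with the triangle inequality, and the easy direction is the same substitution $f=\psi/\sqrt{n!}$ (equivalently, using $\AP\psi=\psi$). One small quibble with your closing aside: permutation-invariance holds equally for $\Bnormti{\cdot}$, since the pushforward $(a,b,\w)\mapsto(a,b,\pi\w)$ leaves $|b|$ untouched, so that is not the reason the lemma is stated for the translation-invariant norm.
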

We include the straightforward derivation of \cref{ABequiv} in \ref{sec:otherproofs}.

\section{Generalized Fourier inversion formula}
\label{sec:proofs}

\label{divergentF}
Our proof of \cref{mainthm} uses the Fourier decomposition of $f_\rhoB$ which we characterize using the Fourier transform of the ReLU activation function. %We then give an explicit formula for the anti-symmetrization of the Fourier basis functions. 
Since ReLU is not integrable its Fourier transform is not defined as a convergent integral but rather in the sense of \emph{tempered distributions} \cite{reed_i_1981}. It this sense, ReLU has the Fourier transform
\begin{equation}\widehat\relu(\theta)=\frac{-1}{\sqrt{2\pi}\cdot\theta^2}+\sqrt{\tfrac\pi2}i\delta'(\theta).\end{equation}

We will not need the precise definition of $\hat\sigma$ but only that it satisfies the following, which we term the \emph{ultraviolet Fourier inversion formula}: For $t>0$,
\begin{equation}
\sigma(y)=\frac1{\sqrt{2\pi}}\int_{|\theta|>t}\hat\sigma(\theta)e^{i\theta y}d\theta
+p_t(y)+O(t g(y)),
\label{Fdef}
\end{equation}
where $p_t$ is a polynomial whose degree is bounded (uniformly in $t$), and $g$ is a non-negative function bounded by a polynomial. 
In \ref{invformula} we show that ReLU satisfies \cref{Fdef} with
\begin{equation}\label{Frelu}\widehat\relu(\theta)=\frac{-1}{\sqrt{2\pi}\cdot\theta^2},\qquad|\theta|>0.\end{equation}
and with remainders $p_\gamma(y)=y/2+\frac1{\pi\gamma}$, $g(y)=y^2$.

To state the generalized Fourier inversion formula more compactly, define the high-frequency part of an activation function $\sigma$ as follows: %\LL{ why change  $\HP{\sigma}{\gamma}$ to  $\HP{\sigma}{t}$?} 
\begin{definition}\label{def:hpdef}
For $\sigma:\RR\to\CC$ define its \emph{high-pass} or $\HP{\sigma}{\gamma}$ at threshold $\gamma>0$ by
\begin{equation}
\label{fourieractivation}
\HP{\sigma}{\gamma}(y)=\frac1{\sqrt{2\pi}}\int_{|\theta|>\gamma}\hat\sigma(\theta)e^{i\theta y}d\theta.
\end{equation}
We define the low-pass $\LP\sigma{\gamma}$ as the remainder $\sigma-\HP\sigma{\gamma}$. 
\end{definition}
Then the ultraviolet Fourier inversion formula \cref{Fdef} holds when the remainder $\sigma-\HP\sigma{\gamma}$ is of the form
\begin{equation}%\LP{\sigma}{\gamma}
\label{uvi}
\sigma-\HP\sigma{\gamma}=p_\gamma+O(\gamma g).\end{equation}
The high-pass of ReLU is \cref{uvrelu} as shown in (\ref{invformula}).

We will prove that we can replace $\sigma$ by its high-pass and that the error incurred becomes exponentially small after anti-symmetrization. 
As a first step towards this error bound, observe that for an anti-symmetric Barron function, the contribution from the term $p_\gamma$ in the Fourier inversion formula vanishes.
\begin{lemma}\label{lem:polynomial}
If $f:\RR^{nd}\to\CC$ is a polynomial of degree $\deg f\le n-2$, then $\AS f\equiv 0$. In particular $\AS\sigmaa_{\w,b}\equiv 0$ if $\sigma$ is an activation function which is a polynomial of degree $\deg\sigma\le n-2$.
\end{lemma}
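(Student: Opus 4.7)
The plan is to reduce the claim to monomials, factor each monomial as a tensor product of single-particle monomials, and apply the identity $\AS(\phi_1\otimes\cdots\otimes\phi_n)=\phi_1\wedge\cdots\wedge\phi_n$ recorded just after \cref{ASdef}. A simple degree count will then force two of the factors to be constants, hence linearly dependent, so the resulting Slater determinant vanishes. First I would expand $f$ in the monomial basis; by linearity of $\AS$ it is enough to treat a single nonzero monomial in the $nd$ scalar coordinates of $\x=(x_1,\dots,x_n)\in(\RR^d)^n$. Such a monomial splits along the particle index as a pure product $f(\x)=\prod_{i=1}^n\phi_i(x_i)=(\phi_1\otimes\cdots\otimes\phi_n)(\x)$, where each $\phi_i:\RR^d\to\CC$ is a nonzero monomial of some degree $d_i\ge 0$ and $\sum_i d_i=\deg f\le n-2$.

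\textbf{Degree count and linear dependence.} By the tensor-product identity, $\AS f=\phi_1\wedge\cdots\wedge\phi_n$, which is a Slater determinant and therefore vanishes identically whenever the $\phi_i$ are linearly dependent. To see that the degree constraint forces such a dependence, I argue contrapositively: if fewer than two of the $\phi_i$ had degree $0$, then at least $n-1$ of them would satisfy $d_i\ge 1$, yielding $\sum_i d_i\ge n-1$ and contradicting $\sum_i d_i\le n-2$. Hence at least two of the $\phi_i$ are nonzero constants, which are automatically scalar multiples of one another, so the corresponding Slater matrix has two proportional rows and $\AS f\equiv 0$.

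\textbf{Ridge-function case and obstacles.} For the ``in particular'' statement, observe that $\sigmaa_{\w,b}(\x)=\sigma(\w\cdot\x+b)$ is the composition of the polynomial $\sigma$ with the affine map $\x\mapsto\w\cdot\x+b$, whose total $\x$-degree is at most $\deg\sigma\le n-2$; the first part of the lemma then applies to $\sigmaa_{\w,b}$. I do not expect any real obstacle: the only nontrivial ingredient is the elementary pigeonhole step showing that at most $n-2$ of the $\phi_i$ can carry any positive degree, and everything else is bookkeeping once one recognizes that a monomial on $(\RR^d)^n$ is automatically a pure tensor across particles.
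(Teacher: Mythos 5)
Your proof is correct and follows essentially the same route as the paper's: reduce to monomials by linearity, then use the degree bound $\deg f\le n-2$ as a pigeonhole to find two particle indices carrying degree zero. The only (cosmetic) difference is the final step: the paper notes the monomial is invariant under the transposition swapping those two indices, so $\AS f=-\AS f=0$, whereas you route through $\AS(\phi_1\otimes\cdots\otimes\phi_n)=\phi_1\wedge\cdots\wedge\phi_n$ and observe that the Slater matrix has two proportional columns --- an equivalent mechanism.
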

\begin{proof}
By linearity it suffices to prove the claim when $f$ is a monomial $f(\x)=\prod_{i=1}^n x_i^{r_i}$ where $r_i\in\NN_0$. Since $\deg f=\sum_i r_i\le n-2$ there exists a pair $i\neq j$ such that $r_i,r_j=0$. Let $\pi_{ij}$ be the permutation which swaps $i$ and $j$. Then $f(\pi_{ij}(\x))=f(\x)$ because $f$ does not depend on $x_i,x_j$. But we also have $f(\pi_{ij}(\x))=-f(\x)$ by anti-symmetry, so $f(\x)=0$.
\end{proof}

We substitute $y=\w\cdot\x+b$ into the ultraviolet Fourier inversion formula \eqref{Fdef} to obtain a decomposition of ridge functions on $\RR^{nd}$
\begin{equation}\label{FdefRnd}
\begin{aligned}
\sigmaa_{\w,b}(\x)&=\frac1{\sqrt{2\pi}}\int_{|\theta|>\gamma}\hat\sigma(\theta)e^{i\theta b}e^{i\theta\w\cdot\x}d\theta
\\&+p_\gamma(\w\cdot\x)+O(\gamma g(\w\cdot\x)),
\end{aligned}
\end{equation}
We anti-symmetrize this ridge function and apply \cref{lem:polynomial} which yields that for $n\ge\deg p_\gamma+2$,
\begin{align}\label{FdefA}
\AS\sigmaa_{\w,b}&=\lim_{\epsilon\to0}\frac1{\sqrt{2\pi}}\int_{|\theta|>\epsilon}e^{ib\theta}\hat\sigma(\theta)\aa_{\theta\w}d\theta,
\end{align}
where convergence is in the $\Ltnd$-norm ($g(\w\cdot\x)$ is bounded in this norm because of the fast-decaying Gaussian envelope $\Xdist$). 

\section{Properties of the anti-symmetrized Fourier basis functions}
%Recall the definition of the anti-symmetrized Fourier basis functions from \cref{expandaa}:

\cref{fig:Dtt} illustrates that $\Xnorm{\aa_\w}$ is bounded by $1$ and vanishes for small $\w$ (\cref{fig:Dtt}). \cref{onebound} and \cref{prop:detbound} below capture this fact rigorously.
\begin{figure}[h]
    \centering
    \includegraphics[width=.9\textwidth]{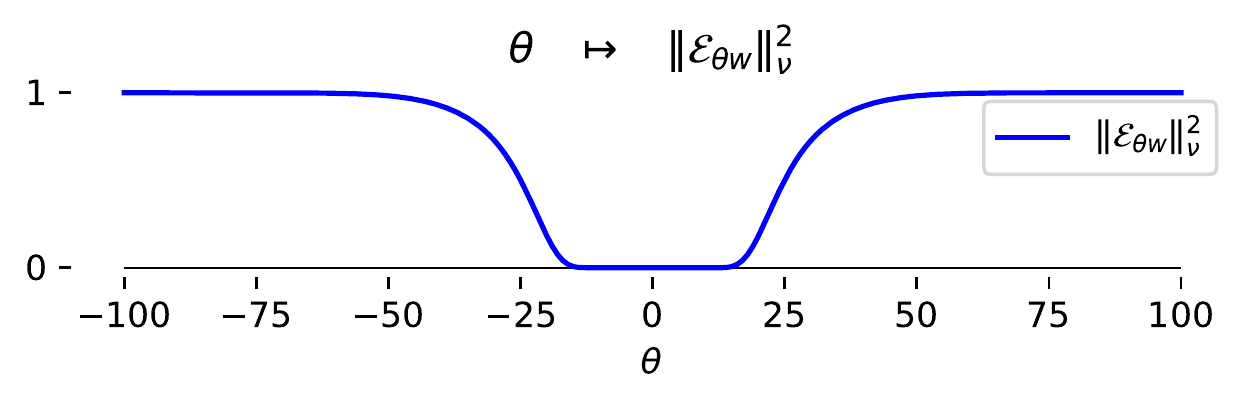}
    \caption{
    The function $\theta\mapsto\Xnorm{\aa_{\theta\w}}^2$ (for a randomly chosen $\w$ scaled to have $\|\w\|_\infty=1$). $n=100$, $d=3$.}\label{fig:Dtt}
\end{figure}
To analyze the behavior of $\Xnorm{\aa_w}$ we use that the overlap between two Slater determinants is the determinant of the \emph{overlap matrix} \cite{lowdin_quantum_1955}, meaning that 
\begin{equation}\label{AeAe}
    \bracket{\aa_\v}{\aa_\w}=\bracket{\wedge_i e_{v_i}}{\wedge_i e_{w_i}}=\det B^{(\v,\w)},
\end{equation}
where $B^{(\v,\w)}\in\RR^{n\times n}$ is given by
\begin{equation}
B^{(\v,\w)}_{ij}=\bracket{e_{v_i}}{e_{w_j}}.
\end{equation}
By \cref{AeAe} the problem of bounding the norms and overlaps of functions $\aa_\w$ corresponds to bounding the magnitude of a determinant. We begin with a simple uniform bound \cref{onebound} before proving the more technical $\w$-dependent bound (\cref{prop:detbound} below) which will lead to the exponentially small error term in \cref{mainthm}.

\begin{lemma}\label{onebound}
$\Xnorm{\aa_\w}\le1$ for all $\w\in\RR^{nd}$.
\end{lemma}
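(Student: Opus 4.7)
The plan is to combine the overlap formula \cref{AeAe} with Hadamard's inequality. By \cref{AeAe}, we have $\Xnorm{\aa_\w}^2 = \bracket{\aa_\w}{\aa_\w} = \det B^{(\w,\w)}$, where $B^{(\w,\w)}_{ij} = \bracket{e_{w_i}}{e_{w_j}}$. So the task reduces to showing $\det B^{(\w,\w)} \le 1$.

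First I would compute the entries of $B^{(\w,\w)}$ explicitly. Since $\xdist = \cN(0,I)$, the inner product $\bracket{e_{w_i}}{e_{w_j}} = \int_{\RR^d} e^{i(w_j-w_i)\cdot x} d\xdist(x)$ is the characteristic function of the standard Gaussian evaluated at $w_j - w_i$, giving $B^{(\w,\w)}_{ij} = e^{-\|w_j - w_i\|^2/2}$. In particular, the diagonal entries satisfy $B^{(\w,\w)}_{ii} = 1$.

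Next I would observe that $B^{(\w,\w)}$ is positive semidefinite: it is the Gram matrix of the vectors $e_{w_1},\ldots,e_{w_n} \in L^2(\RR^d;\xdist)$, so for any $c \in \CC^n$ one has $c^* B^{(\w,\w)} c = \Xnorm{\sum_i c_i e_{w_i}}^2 \ge 0$. Then I would apply Hadamard's inequality for positive semidefinite matrices, which gives $\det B^{(\w,\w)} \le \prod_i B^{(\w,\w)}_{ii} = 1$. Taking square roots yields $\Xnorm{\aa_\w} \le 1$.

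There is no real obstacle here; the proof is a two-line calculation once one recognizes that the overlap matrix is a Gaussian kernel matrix with unit diagonal. The only points requiring mild care are the complex conjugation in the inner product (to get the sign of $w_j - w_i$ right in the characteristic function) and the fact that Hadamard's inequality applies because $B^{(\w,\w)}$ is Hermitian positive semidefinite.
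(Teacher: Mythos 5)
Your proof is correct and follows essentially the same route as the paper: identify $\Xnorm{\aa_\w}^2=\det B^{(\w,\w)}$ via \cref{AeAe}, note that $B^{(\w,\w)}$ is a positive semidefinite Gram matrix, and apply Hadamard's inequality together with the unit diagonal. The explicit Gaussian computation of the off-diagonal entries is not needed for this lemma (the diagonal entries equal $1$ simply because $|e^{iw\cdot x}|=1$), but it does no harm.
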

\begin{proof}
$B_{ij}^{(\w,\w)}$ is the Gram matrix of the $n$ vectors $e_{w_i}\in\Ltd$ so it is a positive semidefinite matrix. It then satisfies $\det(B^{(\w,\w)})\le\prod_i B_{ii}^{(\w,\w)}$ by Hadamard's theorem. So
\begin{align}\label{normsqB}
\Xnorm{\aa_\w}^2&=\det(B^{(\w,\w)})
\\&\le\prod_i B_{ii}^{(\w,\w)}=\prod_{i=1}\bracket{e_{w_i}}{e_{w_i}}^n=1.
\end{align}
\end{proof}

When the envelope is the standard Gaussian $\Xdistr=\cN(0,I_{nd})$ and $\v=\w$, the overlap matrix specializes to (letting $Y\sim\cN(0,1)$)
\begin{align}\label{GaussianD0}
B^{(\w,\w)}_{ij}
&=\EE[\overline{e^{iw_{i1}Y}}e^{iw_{j1}Y}]\cdots\EE[\overline{e^{iw_{id}Y}}e^{iw_{jd}Y}]
\\&=\EE[e^{i(w_{j1}-w_{i1})Y}]\cdots\EE[e^{i(w_{j1}-w_{i1})Y}]
\\&=e^{-\frac12\|w_j-w_i\|^2}.
\end{align}
We apply \cref{normsqB} and expand the square to obtain:
%so in particular,
\begin{equation}\label{GaussianDdiag}
\Xnorm{\aa_\w}^2=e^{-\|\w\|^2}\det\big((e^{w_i\cdot w_j})_{ij}\big).
\end{equation}

\section{Determinant bound}
To obtain an upper bound on \cref{GaussianDdiag} we decompose the matrix $(e^{w_i\cdot w_j})_{ij}$ into a sum $\sum_{k=0}^\infty Q_k$, bounding the ranks and operator norms of the terms $Q_k$. For $L=\sum_{k=1}^{p-1}\rank Q_k$ we can then bound the $L$-th eigenvalue as the tail sum $\sum_{k=p}^\infty\|Q_k\|$. Taking the product of the eigenvalues yields a bound on the determinant and therefore on the norms of anti-symmetrized plane waves $\aa_\w$.%(\cref{sec:det_upper_bound})

\begin{restatable}{lemma}{lemmalowranksum}
    \label{lem:rank_one_terms}
Let $\v=(v_1,\ldots,v_n)^T\in\RR^{n\times d}$ and $\w=(w_1,\ldots,w_n)^T\in\RR^{n\times d}$. Then $(e^{v_i\cdot w_j})_{ij}=\sum_{k=0}^\infty Q_k$ where
\begin{equation}\opn{rank}Q_k\le\binom{k+d-1}{d-1},\qquad\|Q_k\|\le \frac{n(\|v\|_\infty\|w\|_\infty d)^k}{k!}.\label{Qk}\end{equation}
\end{restatable}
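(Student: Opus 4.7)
The plan is to use the Taylor series of the exponential, which already exhibits the matrix as an infinite sum:
\[
(e^{v_i\cdot w_j})_{ij}=\sum_{k=0}^\infty Q_k,\qquad (Q_k)_{ij}=\frac{(v_i\cdot w_j)^k}{k!}.
\]
So $Q_k$ is the natural candidate; both bounds will follow from elementary facts about $Q_k$, one algebraic (rank) and one analytic (entrywise size).

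For the rank bound, I would expand $(v_i\cdot w_j)^k=(\sum_{l=1}^d v_{il}w_{jl})^k$ by the multinomial theorem:
\[
(v_i\cdot w_j)^k=\sum_{\substack{\alpha\in\NN_0^d\\|\alpha|=k}}\binom{k}{\alpha}\Big(\prod_{l=1}^d v_{il}^{\alpha_l}\Big)\Big(\prod_{l=1}^d w_{jl}^{\alpha_l}\Big).
\]
For each fixed multi-index $\alpha$, the matrix with $(i,j)$-entry $(\prod_l v_{il}^{\alpha_l})(\prod_l w_{jl}^{\alpha_l})$ is the outer product of the vector $u^{(\alpha)}\in\RR^n$ with entries $u^{(\alpha)}_i=\prod_l v_{il}^{\alpha_l}$ and the analogous $w$-vector, hence has rank at most $1$. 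Since the number of multi-indices $\alpha\in\NN_0^d$ with $|\alpha|=k$ is the stars-and-bars count $\binom{k+d-1}{d-1}$, we get $\rank Q_k\le\binom{k+d-1}{d-1}$ as claimed.

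For the operator-norm bound, the crude entrywise estimate
\[
|v_i\cdot w_j|\le\sum_{l=1}^d|v_{il}||w_{jl}|\le d\,\|v\|_\infty\|w\|_\infty
\]
gives $|(Q_k)_{ij}|\le (d\|v\|_\infty\|w\|_\infty)^k/k!$ uniformly in $i,j$. Combining this with the Frobenius-norm bound $\|Q_k\|\le\|Q_k\|_F\le n\max_{ij}|(Q_k)_{ij}|$ (which is valid for $n\times n$ matrices and absorbs both dimensions into a single factor of $n$) yields
\[
\|Q_k\|\le\frac{n\,(d\|v\|_\infty\|w\|_\infty)^k}{k!},
\]
completing the proof.

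I do not expect a genuine obstacle here; the argument is essentially bookkeeping on the multinomial expansion. The only step requiring minor care is choosing the right matrix norm to recover exactly the stated constants: using the Frobenius bound rather than an entrywise $\ell^1$ bound on a row is what keeps the prefactor linear in $n$ rather than quadratic. If a sharper polynomial dependence on $n$ were wanted one could instead invoke $\|Q_k\|\le\sqrt{\|Q_k\|_1\|Q_k\|_\infty}$, but for the stated bound the Frobenius estimate suffices.
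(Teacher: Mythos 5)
Your proposal is correct and follows essentially the same route as the paper: the paper's $Q_k$ is exactly your matrix with entries $(v_i\cdot w_j)^k/k!$, its Hadamard-product bookkeeping is just another way of writing your multinomial expansion into $\binom{k+d-1}{d-1}$ rank-one outer products, and the norm bound is the same entrywise estimate followed by $\|Q_k\|\le n\max_{ij}|(Q_k)_{ij}|$. Your Frobenius-norm justification of that last inequality is a small bonus, since the paper asserts it without comment.
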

%\lemmalowranksum*
\begin{proof}
Let $(c_1,\ldots,c_d)$ and $(\tilde c_1,\dots,\tilde c_d)$ be the columns of $v$ and $w$ and let $\entwise$ denote elementwise operations (exponentiation and product, respectively). Then,
\begin{equation}
    \label{columndecomp}
(e^{v_i\cdot w_j})_{ij}=e^{\entwise\sum_{i=1}^dc_i\tilde c_i^T}=\entp_{i=1}^d e^{\entwise c_i\tilde c_i^T}.
\end{equation}
We first consider each factor $e^{\entwise c_i\tilde c_i^T}$ separately. Elementwise multiplication of rank-one matrices given as outer products corresponds to elementwise multiplication of the vectors, $ab^T\entp \tilde a\tilde b^T=(a\entp\tilde a)(b\entp\tilde b)^T$. Therefore, applying the Taylor expansion entrywise,
\begin{equation}
    \label{singlecolumn}
e^{\entwise c\tilde c^T}
=
\sum_{k=0}^\infty\frac{(c\tilde c^T)^{\entwise k}}{k!}
=
\sum_{k=0}^\infty\frac{(c^{\entwise k})(\tilde c^{\entwise k})^T}{k!},
\end{equation}
where $c=c_i$, $\tilde c=\tilde c_i$ are column vectors. Apply \eqref{singlecolumn} to each factor of \eqref{columndecomp} and expand the sums,
\begin{align}
\entp_{i=1}^de^{\entwise c_i\tilde c_i^T}
&=
\sum_{k_1,\ldots,k_d=0}^\infty\frac{(\entp_{i=1}^d c_i^{\entwise k_i})(\entp_{i=1}^d \tilde c_i^{\entwise k_i})^T}{\prod_{i=1}^dk_i!}.
\\&=
\sum_{k=0}^\infty
\sum_{k_1+\ldots+k_d=k}^\infty\frac1{k!}\binom{k}{k_1,\ldots,k_d}(\entp_{i=1}^d c_i^{\entwise k_i})(\entp_{i=1}^d \tilde c_i^{\entwise k_i})^T\label{doublesum}
%\\&=\sum_{k=0}^\infty Q_k,
\end{align}
Let $Q_k$ be the innermost sum of \eqref{doublesum}. We estimate the maximum over the entries,  
\[\|Q_k\|\submax\le\frac{\|\v\|_\infty^{k}\|\w\|_\infty^k}{k!}\sum_{k_1+\ldots+k_d=k}^\infty\binom{k}{k_1,\ldots,k_d}=\frac{\|\v\|_\infty^k\|\w\|_\infty^kd^k}{k!}\]
and apply the inequality $\|Q_k\|\le n\|Q_k\|\submax$.  
\end{proof}

\begin{lemma}\label{lem:eigbound}
Let $\lambda_0\ge\lambda_1\ge\ldots$ be the absolute values of the eigenvalues of $(e^{v_i\cdot w_j})_{ij}$ and let $\mu=\|\v\|_\infty\|\w\|_\infty d$. Then $\lambda_0\le ne^\mu$, and for $\mu\le1/2$ and $p\in\NN$,
\begin{equation}
    \label{binomkd}
    \lambda_L\le\frac{2n}{p!}\mu^p,\qquad L=\binom{p+d-1}{d},
\end{equation}
where the case $p=0$ of \eqref{binomkd} holds with the interpretation $L=\binom{d-1}{d}=0$, $\lambda_0\le ne^{1/2}\le 2n$.
\end{lemma}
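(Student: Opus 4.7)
The plan is to exploit the decomposition $(e^{v_i\cdot w_j})_{ij}=\sum_{k=0}^\infty Q_k$ from \cref{lem:rank_one_terms}, together with the two ingredients it provides: a rank bound $\rank Q_k\le\binom{k+d-1}{d-1}$ and an operator-norm bound $\|Q_k\|\le n\mu^k/k!$, where $\mu=\|\v\|_\infty\|\w\|_\infty d$.

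For the first assertion $\lambda_0\le ne^\mu$, I would simply apply the triangle inequality to the spectral norm and sum the geometric-type series:
\begin{equation*}
\lambda_0=\Big\|\sum_{k=0}^\infty Q_k\Big\|\le\sum_{k=0}^\infty\|Q_k\|\le n\sum_{k=0}^\infty\frac{\mu^k}{k!}=ne^\mu.
\end{equation*}
The edge case $p=0$ of the second assertion then follows immediately from $e^{1/2}\le 2$.

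For the main bound, I would truncate the series at index $p$ and write $A=A_{<p}+A_{\ge p}$, where $A_{<p}=\sum_{k=0}^{p-1}Q_k$ has rank at most $\sum_{k=0}^{p-1}\binom{k+d-1}{d-1}$. The crucial combinatorial step is the hockey-stick identity
\begin{equation*}
\sum_{k=0}^{p-1}\binom{k+d-1}{d-1}=\binom{p+d-1}{d}=L,
\end{equation*}
which makes the rank of $A_{<p}$ at most $L$. The Courant–Fischer / Weyl min-max principle (applied to singular values, since $A$ need not be Hermitian) then yields $\lambda_L\le\|A-A_{<p}\|=\|A_{\ge p}\|$, because any matrix of rank $\le L$ is an admissible approximant.

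Finally, to bound the tail, I would use the operator-norm estimates of the $Q_k$ and the hypothesis $\mu\le 1/2$:
\begin{equation*}
\|A_{\ge p}\|\le\sum_{k=p}^\infty\frac{n\mu^k}{k!}\le\frac{n\mu^p}{p!}\sum_{j=0}^\infty\mu^j=\frac{n\mu^p}{p!(1-\mu)}\le\frac{2n\mu^p}{p!}.
\end{equation*}
Nothing here is a genuine obstacle; the only point requiring a moment of care is matching the rank sum to the stated $L$ via the hockey-stick identity, and remembering that the min-max bound for $\lambda_L$ (the $L$-th singular value) uses rank-$L$, not rank-$(L-1)$, approximants, which is exactly what the truncation $A_{<p}$ provides.
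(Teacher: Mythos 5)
Your proposal is correct and follows essentially the same route as the paper's proof: the decomposition from \cref{lem:rank_one_terms}, the hockey-stick identity to match the accumulated ranks to $L=\binom{p+d-1}{d}$, the min-max/low-rank-approximation bound $\lambda_L\le\|\sum_{k\ge p}Q_k\|$, and the tail estimate using $\mu\le 1/2$. Your explicit remark that the min-max step should be phrased for singular values when $\v\neq\w$ (the matrix need not be Hermitian) is a point the paper glosses over, and is harmless in the application since there $\v=\w$ and the matrix is symmetric positive semidefinite.
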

\begin{proof}
From the identity
\[
\binom{p+d-1}{d}=1+d+\binom{d+1}{d-1}+\cdots+\binom{p+d-2}{d-1},
\]
there are 
\[1+d+\binom{d+1}{d-1}+\cdots+\binom{p+d-2}{d-1}\ge\rank Q_0+\cdots+\rank Q_{p-1}\]
eigenvalues in front of $\lambda_L$ where $L=\binom{p+d-1}{d}$, and we have used \cref{lem:rank_one_terms}. By the min-max principle,
\[\lambda_L\le\left\|\sum_{k=p}^\infty Q_k\right\|\le n\sum_{k=p}^\infty\frac{\mu ^k}{k!}=\frac{n}{p!}\sum_{k=p}^\infty\mu^k=\frac{n}{p!}\frac{\mu^p}{1-\mu}\le \frac{2n}{p!}\mu^p.\]
\end{proof}

\begin{restatable}{proposition}{propdetbound}\label{prop:detbound}
Let $\gamma=\frac1{2\sqrt{d}}$ and let $p$ be any integer such that $\binom{p+d-1}{d}\le n/2$ and $p!\ge 4n^2$. Then,
\begin{equation}
\det((e^{w_i\cdot w_j})_{ij})\le \Big(\frac{\|\w\|_\infty}{2\gamma}\Big)^{pn}  \quad\text{ for }\quad  \|\w\|_\infty\le\gamma.
\end{equation}
\end{restatable}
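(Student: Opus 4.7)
The plan is to bound the determinant as a product of eigenvalues and to invoke \cref{lem:eigbound} with $\v=\w$, partitioning the eigenvalues into a ``large'' group and a ``small'' group whose bounds compensate one another thanks to the two hypotheses on $p$.

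First I would observe that $(e^{w_i\cdot w_j})_{ij}=\sum_k \frac{(w_i\cdot w_j)^k}{k!}=\sum_k \frac{\langle w_i^{\otimes k},w_j^{\otimes k}\rangle}{k!}$ is a sum of Gram matrices and hence positive semidefinite, so its eigenvalues $\lambda_0\ge\lambda_1\ge\cdots\ge\lambda_{n-1}$ are nonnegative reals and $\det((e^{w_i\cdot w_j})_{ij})=\prod_{i=0}^{n-1}\lambda_i$. The hypothesis $\|\w\|_\infty\le\gamma=\tfrac1{2\sqrt d}$ gives $\mu:=\|\w\|_\infty^2 d\le 1/4\le 1/2$, so \cref{lem:eigbound} applies with $\v=\w$.

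Next I would split the spectrum at the index $L=\binom{p+d-1}{d}$. For $0\le i<L$ I use the $p=0$ case of \cref{lem:eigbound} to bound $\lambda_i\le\lambda_0\le 2n$. For $i\ge L$, monotonicity of the eigenvalues and \cref{lem:eigbound} give $\lambda_i\le\lambda_L\le\frac{2n\mu^p}{p!}$, and combining with the hypothesis $p!\ge 4n^2$ yields $\lambda_i\le\frac{\mu^p}{2n}$. Multiplying these two bounds together produces
\[
\det\bigl((e^{w_i\cdot w_j})_{ij}\bigr)\;\le\;(2n)^L\cdot\Bigl(\frac{\mu^p}{2n}\Bigr)^{n-L}\;=\;(2n)^{2L-n}\,\mu^{p(n-L)}.
\]

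Now I would exploit the other hypothesis $L\le n/2$: it forces $2L-n\le 0$, so $(2n)^{2L-n}\le 1$, and it forces $n-L\ge n/2$, so (since $\mu\le 1/4<1$) $\mu^{p(n-L)}\le\mu^{pn/2}$. The final step is to express $\mu$ in terms of $\gamma$: from $\gamma=\tfrac{1}{2\sqrt d}$ we have $d=\tfrac{1}{4\gamma^2}$, and therefore
\[
\mu=\|\w\|_\infty^2 d=\Bigl(\frac{\|\w\|_\infty}{2\gamma}\Bigr)^{\!2},
\]
which gives $\mu^{pn/2}=\bigl(\tfrac{\|\w\|_\infty}{2\gamma}\bigr)^{pn}$, the claimed bound.

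The only delicate step is the bookkeeping around the two regimes of eigenvalues: the prefactor $(2n)^L$ must be absorbed, and this works only because the hypotheses $L\le n/2$ and $p!\ge 4n^2$ are tuned so that $(2n)^L\cdot(2n)^{-(n-L)}\le 1$ and the exponent on $\mu$ remains at least $pn/2$. Everything else is bookkeeping once \cref{lem:eigbound} is in hand.
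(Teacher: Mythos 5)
Your proposal is correct and follows essentially the same route as the paper: both invoke \cref{lem:eigbound} with $\v=\w$, split the spectrum into at most $n/2$ eigenvalues bounded by $\lambda_0\le 2n$ and the rest bounded by $\lambda_L\le\frac{2n}{p!}\mu^p\le\frac{\mu^p}{2n}$ via $p!\ge 4n^2$, and multiply so that the factors of $2n$ cancel, leaving $\mu^{pn/2}=\bigl(\tfrac{\|\w\|_\infty}{2\gamma}\bigr)^{pn}$. Your bookkeeping at the split index $L=\binom{p+d-1}{d}$ is slightly more explicit than the paper's (which splits directly at $\lfloor n/2\rfloor$), but the argument is the same.
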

%\propdetbound*
\begin{proof}
Let $\v=\w$. By \cref{lem:eigbound} and the assumptions on $p$ we have $\lambda_{\lfloor n/2\rfloor}\le\frac{2n}{p!}\mu^p\le\frac{\mu^p}{2n}$ and $\lambda_0\le 2n$ where $\mu=d\|\w\|_\infty^2$, so it follows that 
$|\det((e^{w_i\cdot w_j})_{ij})|\le\lambda_0^{n/2}\lambda_{n/2}^{n/2}\le(\mu^p)^{n/2}=(\|\w\|_\infty\sqrt d)^{pn}=(\frac12\frac{\|\w\|_\infty}{\gamma})^{pn}$. This holds when $d\|\w\|_\infty^2=\mu\le1/4$, i.e., when $\|\w\|_\infty^2\le\gamma^2$.
%=(\nu/2)^{pn}$. Set $\v=\w$.
\end{proof}

\cref{prop:detbound} suffices to give a fine-grained bound on the norms of the anti-symmetrized plane waves $\aa_\w$.
Let $p=\Theta(n^{1/d})$ and apply the bound to \cref{GaussianDdiag} to obtain:
\begin{equation}\label{Dttlowbound}
\Xnorm{\aa_{\w}}^2\le\Big(\frac{\|\w\|_\infty}{2\gamma}\Big)^{\Omega(n^{1+1/d})}
\end{equation}
for $\|\w\|_\infty\le\gamma$, where $\gamma=\frac1{2\sqrt d}$.

\section{Bound on the infra-red truncation error}
In the limit as the infra-red cutoff $\epsilon\to0$, \cref{FdefA} provides an expansion of an anti-symmetrized ridge function into functions $\aa_\w$. We need to bound the error when evaluating \cref{FdefA} with a finite infra-red truncation. We denote the anti-symmetric functions defined with or without such a truncation as follows:
\begin{definition}
Let $A_{\w,b}=\AS\sigmaa_{\w,b}$, and for $\gamma>0$, let
\begin{equation}
A_{\w,b;\gamma}=\frac1{\sqrt{2\pi}}\int_{|\theta|\ge\gamma}e^{ib\theta}\hat\sigma(\theta)\aa_{\theta\w}d\theta.
\label{At}
\end{equation}
To bound the error incurred from the infrared truncation we use the following triangle inequality:
\end{definition}
\begin{lemma}[Triangle inequality]
\begin{equation}
\Xnorm{A_{\w,b;\gamma}-A_{\w,b}}\vphantom{\frac11}
\le
\frac1{\sqrt{2\pi}}\int_{0<|\theta|<\gamma}|\hat\sigma(\theta)|\Xnorm{\aa_{\theta\w}}d\theta,
\end{equation}
where the right-hand side is interpreted as the limit \cref{limeps} below.
\end{lemma}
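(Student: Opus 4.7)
The plan is a routine application of Minkowski's integral inequality (triangle inequality for $L^2$-valued Bochner integrals), combined with passage to the limit as the infrared cutoff $\epsilon\to 0$. First I would rewrite the difference $A_{\w,b}-A_{\w,b;\gamma}$ using the limit representation \cref{FdefA}: for any $0<\epsilon<\gamma$ the integral over $|\theta|>\epsilon$ splits as the sum of the integral over $|\theta|\ge\gamma$ (which is exactly $A_{\w,b;\gamma}$) and the integral over the annulus $\epsilon<|\theta|<\gamma$. Sending $\epsilon\to0$ in $\Ltnd$-norm then gives
\begin{equation}
A_{\w,b}-A_{\w,b;\gamma}=\lim_{\epsilon\to 0}\frac{1}{\sqrt{2\pi}}\int_{\epsilon<|\theta|<\gamma}e^{ib\theta}\hat\sigma(\theta)\aa_{\theta\w}\,d\theta,
\label{limeps}
\end{equation}
which I expect is the display referenced by \cref{limeps} in the statement.

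Next, for each fixed $\epsilon>0$ the integrand in \cref{limeps} is a bounded, continuous $\Ltnd$-valued function on the compact annulus $\epsilon<|\theta|<\gamma$: indeed $\hat\sigma(\theta)=-1/(\sqrt{2\pi}\theta^2)$ is bounded away from $\theta=0$, while $\Xnorm{\aa_{\theta\w}}\le 1$ by \cref{onebound}. Hence Minkowski's inequality for Bochner integrals yields
\begin{equation}
\Xnorm{\frac{1}{\sqrt{2\pi}}\int_{\epsilon<|\theta|<\gamma}e^{ib\theta}\hat\sigma(\theta)\aa_{\theta\w}\,d\theta}\le\frac{1}{\sqrt{2\pi}}\int_{\epsilon<|\theta|<\gamma}|\hat\sigma(\theta)|\Xnorm{\aa_{\theta\w}}\,d\theta.
\end{equation}

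Finally I would take $\epsilon\to0$. The left-hand side converges to $\Xnorm{A_{\w,b;\gamma}-A_{\w,b}}$ by continuity of the norm applied to the $\Ltnd$-limit \cref{limeps}. The right-hand side is a monotonically increasing function of the integration domain (the integrand is non-negative), so its $\epsilon\to 0$ limit exists in $[0,\infty]$ and is by definition the value of the right-hand side in the lemma statement. Combining these two facts gives the claimed inequality.

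The only mildly delicate point is ensuring that the outer limit on the right-hand side is the correct interpretation: it may a priori be infinite, but the inequality still holds trivially in that case, and in the applications the decay bound \cref{Dttlowbound} on $\Xnorm{\aa_{\theta\w}}$ near $\theta=0$ will make the limit finite and in fact exponentially small. No new estimate is needed here beyond the uniform bound $\Xnorm{\aa_{\theta\w}}\le 1$ from \cref{onebound} and the boundedness of $\hat\sigma$ away from zero; the sharper decay is used only later when converting this lemma into the exponentially small error term of \cref{mainthm}.
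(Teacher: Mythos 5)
Your proposal is correct and follows essentially the same route as the paper: both write $A_{\w,b}=\lim_{\epsilon\to0}A_{\w,b;\epsilon}$ via \cref{FdefA}, identify the difference $A_{\w,b;\gamma}-A_{\w,b;\epsilon}$ as an integral over the annulus $\epsilon<|\theta|<\gamma$, apply the triangle (Minkowski) inequality for vector-valued integrals there, and pass to the limit. The extra care you take about boundedness of the integrand away from $\theta=0$ and monotonicity of the right-hand side only fills in details the paper leaves implicit.
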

\begin{proof}
We use \cref{FdefA} to write $A_{\w,b}$ as a limit. Then,
\begin{align}\label{triangle}
&\Xnorm{A_{\w,b;\gamma}-A_{\w,b}}\vphantom{\frac11}
\\=&\lim_{\epsilon\to0}\Xnorm{A_{\w,b;\gamma}-A_{\w,b;\epsilon}}\vphantom{\frac11}
\\\le&\lim_{\epsilon\to0}\frac1{\sqrt{2\pi}}\int_{\epsilon<|\theta|<\gamma}|\hat\sigma(\theta)|\Xnorm{\aa_{\theta\w}}d\theta\label{limeps}
%\\=&\frac1{\sqrt{2\pi}}\int_{|\theta|<t}|\hat\sigma(\theta)|\Xnorm{\aa_{\theta\w}}d\theta.
\end{align}
\end{proof}
Combining the triangle inequality with the bound from \cref{prop:detbound} yields the following error bound, as shown in \ref{ests}.
\begin{corollary}\label{AAbound}
Suppose $\|\w\|_\infty=1$ and let $\gamma=\frac1{2\sqrt{d}}$. Then,
\begin{equation}
\Xnorm{A_{\w,b;\gamma}-A_{\w,b}}\le 2^{-\Omega(n^{1+1/d})}.
\end{equation}
\end{corollary}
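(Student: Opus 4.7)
The plan is to combine the three ingredients already established: the triangle inequality preceding the corollary, the explicit Fourier coefficient $|\hat\relu(\theta)|=1/(\sqrt{2\pi}\,\theta^2)$ from \cref{Frelu}, and the fine-grained decay estimate \cref{Dttlowbound} on $\Xnorm{\aa_{\theta\w}}$ as $\theta\to 0$. Because $\|\w\|_\infty=1$, for every $\theta\in(-\gamma,\gamma)$ the vector $\theta\w$ satisfies $\|\theta\w\|_\infty=|\theta|\le\gamma$, so \cref{Dttlowbound} applies and gives
\begin{equation}
\Xnorm{\aa_{\theta\w}}\le\Bigl(\tfrac{|\theta|}{2\gamma}\Bigr)^{c\,n^{1+1/d}}
\end{equation}
for some constant $c>0$. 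This is the key fact: the determinantal suppression at small frequency beats the $1/\theta^2$ singularity of $\hat\relu$ by an enormous margin, so the infra-red piece of the integral becomes integrable and exponentially small in $n^{1+1/d}$.

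Next I would plug these estimates into the triangle-inequality lemma. For $|\theta|<\gamma$,
\begin{equation}
|\hat\relu(\theta)|\,\Xnorm{\aa_{\theta\w}}
\le \frac{1}{\sqrt{2\pi}\,\theta^2}\Bigl(\tfrac{|\theta|}{2\gamma}\Bigr)^{s},
\qquad s:=c\,n^{1+1/d},
\end{equation}
so the right-hand side of the triangle inequality is bounded by
\begin{equation}
\frac{1}{\sqrt{2\pi}}\int_{0<|\theta|<\gamma}\frac{1}{\sqrt{2\pi}\,\theta^2}\Bigl(\tfrac{|\theta|}{2\gamma}\Bigr)^{s}d\theta
=\frac{1}{\pi(2\gamma)^{s}}\int_0^\gamma \theta^{s-2}\,d\theta
=\frac{1}{\pi\,2^{s}\,\gamma\,(s-1)}.
\end{equation}
For $n$ large enough that $s\ge 2$ the integral converges, and the bound is manifestly of the form $2^{-\Omega(n^{1+1/d})}$, which gives the stated corollary.

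I expect the only delicate point to be justifying the integration near $\theta=0$, since the limit in \cref{limeps} is part of the statement of the triangle inequality. The decay $|\theta|^{s}$ from $\Xnorm{\aa_{\theta\w}}$ kills the $\theta^{-2}$ singularity once $s>2$ (i.e.\ for all sufficiently large $n$), so the limit $\epsilon\to0$ exists and equals the stated integral. The constant in the $\Omega(n^{1+1/d})$ can be tracked explicitly: the factor $2^{-s}=2^{-c n^{1+1/d}}$ dominates the polynomially small $1/(\gamma(s-1))$ prefactor, and $p=\Theta(n^{1/d})$ in \cref{prop:detbound} is consistent with the side conditions $\binom{p+d-1}{d}\le n/2$ and $p!\ge 4n^2$ for large $n$. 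Modulo these straightforward verifications, the corollary follows directly from \cref{prop:detbound} and the triangle inequality.
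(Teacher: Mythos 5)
Your proposal is correct and follows essentially the same route as the paper: apply the triangle inequality, insert $|\hat\relu(\theta)|=1/(\sqrt{2\pi}\,\theta^2)$ and the decay bound \cref{Dttlowbound} on $\Xnorm{\aa_{\theta\w}}$, and observe that the determinantal suppression overwhelms the $\theta^{-2}$ singularity. The only cosmetic difference is that you integrate $\theta^{s-2}$ explicitly whereas the paper factors out two powers of $|\theta|/(2\gamma)$ to cancel the singularity and then bounds the remaining factor uniformly by $2^{-\Omega(n^{1+1/d})}$; both yield the same conclusion.
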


\section{Expanding a Barron function}

%\subsection{Applying the bounds on $\alpha_\w$}

Given an anti-symmetric Barron function $\psi$, let $\rho$ be a Barron measure, i.e.,
\begin{equation}\label{fullexpand}
\psi=\AS f_\rho=\int a \: A_{\w,b} \: d\rho(a,b,\w).
\end{equation}
We say that $\rho$ is \emph{canonical} if $\|\w\|_\infty=1$ for all $(a,b,\w)$ in the support of $\rho$. As the next lemma shows we may assume without loss of generality that $\rho$ is canonical.

\begin{lemma}
Fix $p\in[1,\infty]$. In the definition of the Barron norm we may restrict to measures $\rho$ such that $\|\w\|_p=1$ for all $(a,b,\w)$ in the support of $\rhoB$. The resulting definition is equivalent with the original one.
In particular,
\begin{equation}
\ABnorm\psi=\inf\{\varphi(\rho)\:|\:\rho\text{ is canonical and }\AS f_\rho=\psi\}.
\end{equation}
\end{lemma}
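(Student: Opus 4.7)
The proof plan relies on the positive homogeneity of ReLU, namely $\relu(\lambda y) = \lambda \relu(y)$ for $\lambda \ge 0$. My strategy is to show that any Barron measure $\rho$ for $f$ admits a pushforward $\tilde\rho$ supported on $\{\|\w\|_p = 1\}$ which preserves both the function $f_\rho$ (up to a constant that vanishes after anti-symmetrization) and the weight $\varphi(\rho)$. The fact that the rescaling is norm-preserving regardless of the choice of $p$ is the reason the lemma works for any $p \in [1,\infty]$.

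First I would split $\rho = \rho_0 + \rho_+$ into its restrictions to $\{\w = 0\}$ and $\{\w \neq 0\}$ respectively. The part $\rho_0$ contributes a constant $c = \int a\relu(b)\,d\rho_0$ to $f_\rho$ and contributes $0$ to $\varphi(\rho)$ since $\|\w\|_1 = 0$ on its support. On $\rho_+$ I would define the rescaling map
\begin{equation}
\Phi(a, b, \w) = \bigl(a\|\w\|_p,\ b/\|\w\|_p,\ \w/\|\w\|_p\bigr)
\end{equation}
and let $\tilde\rho = \Phi_* \rho_+$, which is finite (since $\rho_+$ is) and supported on $\{\|\w\|_p = 1\}$. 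By ReLU homogeneity, $a\relu(\w\cdot\x + b) = (a\|\w\|_p)\relu\bigl((\w/\|\w\|_p)\cdot\x + b/\|\w\|_p\bigr)$ for $\|\w\|_p > 0$, so by change of variables $f_{\tilde\rho} = f_{\rho_+} = f_\rho - c$. For the weight, the factor $\|\w\|_p$ from the pushed-forward $|a|$ cancels with the factor $\|\w\|_p$ in the denominator of $\w/\|\w\|_p$, yielding
\begin{equation}
\varphi(\tilde\rho) = \int |\tilde a|\,\|\tilde\w\|_1\,d\tilde\rho = \int |a|\,\|\w\|_p \cdot \frac{\|\w\|_1}{\|\w\|_p}\,d\rho_+ = \int |a|\,\|\w\|_1\,d\rho_+ = \varphi(\rho).
\end{equation}

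For the ``in particular'' claim, I would apply $\AS$ to both sides of $f_\rho = f_{\tilde\rho} + c$. Since any constant function is symmetric and therefore annihilated by $\AS$, if $\AS f_\rho = \psi$ then $\AS f_{\tilde\rho} = \psi$ as well. Thus $\tilde\rho$ is a canonical measure satisfying $\AS f_{\tilde\rho} = \psi$ with $\varphi(\tilde\rho) = \varphi(\rho)$. Taking the infimum over all Barron measures $\rho$ for $\psi$ gives
\begin{equation}
\inf\{\varphi(\tilde\rho) : \tilde\rho \text{ canonical}, \AS f_{\tilde\rho} = \psi\} \le \ABnorm\psi,
\end{equation}
and the reverse inequality is immediate since canonical measures form a subclass of all Barron measures; combined with \cref{ABequiv} this yields the stated equality.

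The main obstacle is largely bookkeeping: verifying that $\Phi$ is measurable and that the change-of-variables identity for the integrand holds in an $L^2(\Xdist^{\otimes n})$ sense rather than just pointwise, so that $f_{\tilde\rho}$ is a well-defined element of the Barron space. Aside from this, the argument is a clean rescaling, and the key conceptual point — that the particular choice of $p$ is immaterial because the product $|a|\,\|\w\|_1$ rescales neutrally — is essentially algebraic.
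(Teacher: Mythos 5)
Your proposal is correct and follows essentially the same route as the paper: both rest on the positive homogeneity of ReLU and the pushforward $(a,b,\w)\mapsto(a\|\w\|_p,\,b/\|\w\|_p,\,\w/\|\w\|_p)$, which preserves $f_\rho$ and leaves $\varphi$ invariant because the factors of $\|\w\|_p$ cancel. Your treatment is in fact slightly more careful than the paper's one-line argument, since you explicitly isolate the $\{\w=0\}$ part (where the rescaling is undefined) and observe that its constant contribution costs nothing in $\varphi$ and is annihilated by $\AS$.
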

\begin{proof}
Given $\rho$ such that $f_\rho=f$, define $\tilde\rho=h(\rho)=\rho\circ h$ where $h(a,b,\w)=(\tilde a,\tilde b,\tilde\w)$ where
\[\tilde a=\|\w\|_p a,\quad
\tilde b=b/\|\w\|_p,\quad
\tilde\w=\w/\|\w\|_p.\]
Then $\varphi(\tilde\rho)=\varphi(\rho)$, and $f_{\tilde\rho}=f_\rho$ due to the homogeneity of ReLU.
\end{proof}
For any canonical $\rho$ we define
\begin{equation}\label{truncexpand}
\psi_\gamma^{(\rho)}=\int a\:A_{\w,b;\gamma}\:d\rho(a,b,\w).
\end{equation}
We apply the usual triangle inequality to the integral over $\rho$ and then apply \cref{AAbound} to obtain
\begin{align}\label{prelimit}
\Xnorm{\psi_\gamma^{(\rho)}-\psi}
&\le\int |a|\Xnorm{A_{\w,b;\gamma}-A_{\w,b}} d\rho(a,b,\w)\\
&=2^{-\Omega(n^{1+1/d})}\int |a| d\rho(a,b,\w)\\
&=2^{-\Omega(n^{1+1/d})}\varphi(\rho),
\end{align}
where $\gamma=\frac1{2\sqrt{d}}$.
By expanding $A_{\w,b;\gamma}$ we can write $\psi_\gamma^{(\rho)}$ in the following form:
\begin{definition}\label{defcomplexmeasure}
Given a canonical Barron measure $\rho$ and threshold $\gamma>0$, define the complex measure
\begin{equation}
d\mu(\theta,a,b,\w)=\frac{-1_{|\theta|\ge\gamma}\:e^{ib\theta}}{{2\pi}\theta^2}\:d\theta\:\times\:a\:d\rho(a,b,\w)
\end{equation}
Then,
\begin{equation}\label{Adefasint}
\psi_{\gamma}^{(\rho)}=\iint \aa_{\theta\w} d\mu(\theta,a,b,\w).
\end{equation}
\end{definition}

\section{Proof of the main theorem}
Variants of the following fact are attributed to Maurey by Pisier \cite{pisier_remarques_1980,Barron1993} and widely used in the literature \cite{Barron1993,e_barron_2022}.
%\cref{convexcombi}
Its statement follows from lemma 1 (page 934) of \cite{Barron1993} when $\psi\neq0$ and is vacuously true when $\psi=0$.
\begin{lemma}[Maurey]
\label{convexcombi}
Let $\mathcal F$ be a subset of a Hilbert space with inner product $\bracket{f}{g}$. Suppose $\Xnorm{f}\le 1$ for all $f\in\mathcal F$, and let 
\begin{equation}
    \psi=\int f d\mu(f),
\end{equation}
where $\mu$ is a complex-valued measure on $\mathcal F$.
Then for each $m\in\NN$ there exists a complex linear combination $\psi_m=\frac1m\sum_{k=1}^ma_k f_k$ of $m$ elements of $\mathcal F$ such that
\[\|\psi_m-\psi\|\le\frac{\|\mu\|}{\sqrt{m}},\]
where $\|\mu\|=\int 1 d|\mu|$ is the total variation of the complex measure $\mu$.
\end{lemma}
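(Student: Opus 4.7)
The plan is the classical Maurey sampling argument: realize $\psi$ as the expectation of a bounded Hilbert-space-valued random variable, then concentrate by averaging $m$ independent copies and invoke the Hilbert-space variance bound.

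First I would reduce the complex measure $\mu$ to a probability measure via its polar decomposition. Write $\mu = g\,|\mu|$, where $|\mu|$ is the (positive) total variation measure and $g:\mathcal F\to\CC$ is a measurable unit-modulus density, and set $P=|\mu|/\|\mu\|$, a probability measure on $\mathcal F$. Then
\[
\psi=\int_{\mathcal F} f\,d\mu(f)=\|\mu\|\int_{\mathcal F} g(f)\,f\,dP(f)=\|\mu\|\,\EE_{f\sim P}[g(f)\,f].
\]
Letting $X=\|\mu\|\,g(f)\,f$ with $f\sim P$, we have $\EE X=\psi$, and since $|g|=1$ and $\Xnorm{f}\le 1$, also $\Xnorm{X}\le\|\mu\|$ almost surely.

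Next I would draw $f_1,\dots,f_m\sim P$ independently, set $a_k=\|\mu\|\,g(f_k)$, and take
\[
\psi_m=\frac{1}{m}\sum_{k=1}^m a_k f_k=\frac{1}{m}\sum_{k=1}^m X_k,
\]
the empirical mean of $m$ i.i.d.\ copies of $X$. By independence and the Hilbert-space identity $\EE\Xnorm{X-\EE X}^2=\EE\Xnorm{X}^2-\Xnorm{\EE X}^2$,
\[
\EE\Xnorm{\psi_m-\psi}^2=\frac{1}{m^2}\sum_{k=1}^m\EE\Xnorm{X_k-\psi}^2\le\frac{1}{m}\,\EE\Xnorm{X}^2\le\frac{\|\mu\|^2}{m}.
\]
By the probabilistic method, some realization $(f_1,\dots,f_m)\in\mathcal F^m$ attains $\Xnorm{\psi_m-\psi}\le\|\mu\|/\sqrt{m}$, yielding the claim.

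The only technical point worth flagging is the polar decomposition of a general complex Borel measure on an abstract index space $\mathcal F$: one needs $|\mu|$ to be a finite positive measure (which is precisely $\|\mu\|<\infty$) and a measurable unit-modulus Radon--Nikodym derivative $g=d\mu/d|\mu|$ to exist. Both are standard facts. Everything else is the routine second-moment computation that appears in \cite{Barron1993} for real-valued $\mu$ and extends transparently to the complex-valued case, so I anticipate no real obstacle beyond being careful with the complex scalars $a_k$.
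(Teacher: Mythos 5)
Your proposal is correct, and it is essentially the same argument the paper relies on: the paper does not prove \cref{convexcombi} itself but defers to Lemma~1 of \cite{Barron1993}, whose proof is exactly this Maurey sampling/second-moment computation. Your polar-decomposition step $\mu=g\,|\mu|$ is the right (and standard) way to reduce the complex-measure case to the probability-measure case handled there, and the degenerate case $\mu=0$ is trivially covered by taking $a_k=0$.
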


Let $\mu$ be the complex measure from \cref{defcomplexmeasure}. Then the absolute value of $\mu$ is
\begin{equation}
d|\mu|(\theta,a,b,\w)=\frac{-1_{|\theta|\ge\gamma}}{{2\pi}\theta^2}\:d\theta\:\times\:|a|\:d\rho(a,b,\w),
\end{equation}
which is a product measure. So its total variation is
\begin{align}\label{TV}
\|\mu\|&=\iint 1d|\mu|(\theta,\w)\\
&=-\frac1{2\pi}\int_{|\theta|\ge\gamma}\frac{1}{\theta^2}\:d\theta\:\times\varphi(\rho)=\frac{\varphi(\rho)}{\pi\gamma}.
\end{align}
We are now ready to finish the proof of the main theorem.
\begin{proof}[Proof of \cref{mainthm}]
Applying the definition of the Barron norm, pick a canonical Barron measure $\rho$ such that $\AS f_\rho=\psi$ and such that
\begin{equation}\label{nearby}
\varphi(\rho)\le(1+\epsilon)\ABnorm{\psi},
\end{equation}
where $\epsilon=2^{-n^2}$. We set the truncation at level $\gamma=\frac1{2\sqrt d}$ as in \cref{AAbound}.
By \cref{prelimit} we can truncate the infra-red part, resulting in an error of order
\begin{align}\label{apply1}
\Xnorm{\psi_\gamma^{(\rho)}-\psi}
&=2^{-\Omega(n^{1+1/d})}\varphi(\rho)
\end{align}
We now approximate $\psi_\gamma^{(\rho)}$ with a finite sum. \cref{Adefasint} decomposes $\psi_\gamma^{(\rho)}$ as an integral over functions $\aa_{\theta\w}$, $\Xnorm{\aa_{\theta\w}}\le1$, against the complex measure $\mu$. We then apply \cref{convexcombi} to obtain a linear combination $\psi_m$ of $m$ terms such that
\begin{equation}\label{applycomplexlemma}
\Xnorm{\psi_m-\psi_\gamma^{(\rho)}}\le\|\mu\|/\sqrt m=C\varphi(\rho)/\sqrt m,
\end{equation}
where $C=2\sqrt d/\pi$. Here, the last equality is by \cref{TV}. Combine \cref{apply1,applycomplexlemma} using the triangle inequality to obtain
\begin{align}
\Xnorm{\psi_m-\psi}
&\le\varphi(\rho)(\tfrac{C}{\sqrt m}+2^{-\Omega(n^{1+1/d})})\\
&\le(1+\epsilon)\ABnorm{\psi}(\tfrac{C}{\sqrt m}+2^{-\Omega(n^{1+1/d})})\\
&=\ABnorm{\psi}(\tfrac{C}{\sqrt m}+2^{-\Omega(n^{1+1/d})})\\
&+\epsilon\ABnorm{\psi}(\tfrac{C}{\sqrt m}+2^{-\Omega(n^{1+1/d})}).
\end{align}
Finally, note that we can absorb the $\epsilon$-term because
\[2^{-n^2}\ABnorm{\psi}(\tfrac{C}{\sqrt m}+2^{-\Omega(n^{1+1/d})})=\ABnorm{\psi}2^{-\Omega(n^{1+1/d})}.\]
\end{proof}

\section{Experiments}    
\cref{fig:thmplot} shows the relation between the anti-symmetric Barron norm $\ABnorm{\psi}$ and the optimal approximation error $\Xnorm{\psi_m-\psi}$ by Slater sums $\psi_m$ as in \cref{mainthm,maincor1}.
Given different target functions $\psi$ we minimized $\Xnorm{\psi_m-\psi}$ over Slater sums to estimate the optimal LHS in \cref{mainthm}. We compared the optimal value with an estimate of the antisymmetric Barron norm $\ABnorm{\psi}$ obtained by a constrained minimization of the network weights over anti-symmetrized neural networks. To construct different target states $\psi$ we used the ground state of a fermionic quantum harmonic oscillator restricted to a sliding window of varying size and location. %In \cref{fig:thmplot} we have $n=6$ and $d=3$, and the determinant Ansatz $\psi_m$ has $m=4096$.
%See \cref{numericsdetails} for more details on the estimation.
\cref{fig:thmplot} illustrates that the anti-symmetric Barron norm provides an upper bound on the complexity of approximation by determinant-based Ansatz as in \cref{mainthm,maincor1}. 

\begin{figure}[h]
    \centering
    \includegraphics[width=.6\textwidth]{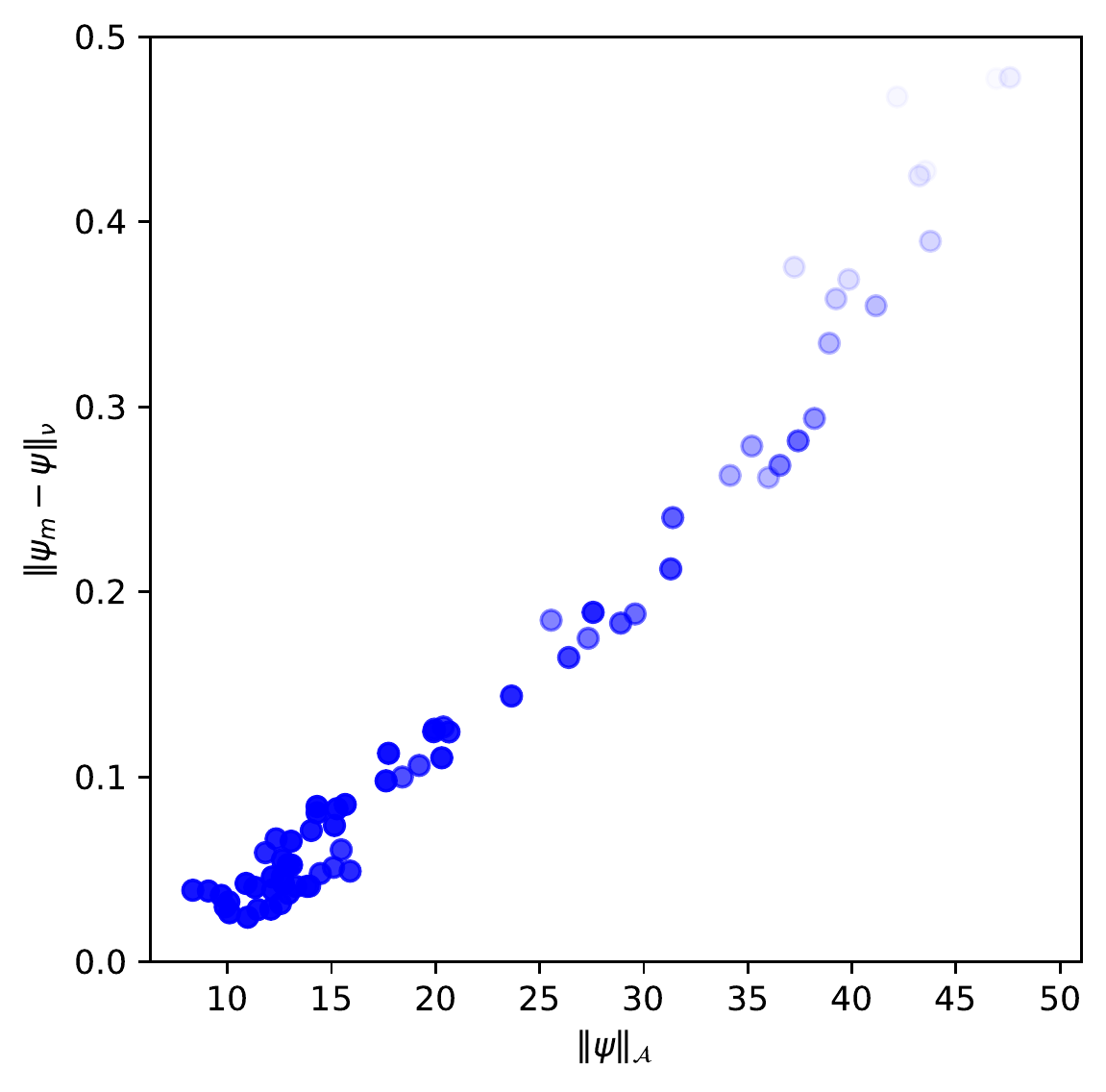}
    \caption{Relation between the anti-symmetric Barron norm $\ABnorm{\psi}$ and the approximation error $\Xnorm{\psi_m-\psi}$ of \cref{mainthm}. Here, $n=6$, $d=3$, and $m=4096$. The opacity is $(1-\epsilon)^{10}$ where $\epsilon$ is the approximation error in the $\epsilon$-smooth anti-symmetric Barron norm $\ABnorm{\psi}^{(\epsilon)}$ (estimates with $\epsilon\approx0.1$ or worse show up as a light color).}
\label{fig:thmplot}
\end{figure}
\subsection{Estimating the anti-symmetric Barron norm}
The Barron norm of \cref{Bspdef} is defined in terms of the ReLU activation, but we can similarly define a Barron norm $\Bnorm{f}^{[\sigma]}$ with any other activation function $\sigma$. Then $\Bnorm{f}=\Bnorm{f}^{[\opn{ReLU}]}$ by definition. We observe that the definition is not overly sensitive to the choice of activation function: 
\begin{lemma}\label{eitherac}
\[\Bnorm{f}^{[\opn{ReLU}]}\le\Bnorm{f}^{[\opn{softplus}]},
\]
where $\opn{softplus}(y)=\log(1+e^y)$. Similarly, $\ABnorm{f}^{[\opn{ReLU}]}$ is bounded by $\ABnorm{f}^{[\opn{softplus}]}$.
\end{lemma}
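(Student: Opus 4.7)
The plan is to realize softplus as a positive mixture of translated ReLU functions, which then converts any softplus-Barron expansion into a ReLU-Barron expansion without increasing the norm $\varphi(\rho)$.

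First I would establish the key identity. Set $\mu(t)=\opn{softplus}''(t)=\frac{e^{-t}}{(1+e^{-t})^2}$. This is a positive, symmetric probability density on $\RR$ (its integral equals the increment of the sigmoid from $0$ to $1$). Since $\opn{ReLU}''=\delta$ in the distributional sense, convolution with $\mu$ recovers $\opn{softplus}$ up to an affine part, and matching the asymptotics ($\to 0$ as $y\to-\infty$, $\sim y$ as $y\to+\infty$, using symmetry of $\mu$) pins down the representation
\begin{equation}
\opn{softplus}(y)=\int_{\RR}\opn{ReLU}(y-t)\,\mu(t)\,dt, \qquad y\in\RR.
\end{equation}
This can alternatively be verified by differentiating both sides twice and checking one boundary condition.

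Next, given any softplus-Barron measure $\rho$ with $f=\int a\,\opn{softplus}(\w\cdot\x+b)\,d\rho(a,b,\w)$, I would substitute the identity above and apply Fubini (justified by $\int|a|\|\w\|_1\,d\rho<\infty$ and $\int\mu=1$) to obtain
\begin{equation}
f(\x)=\iint a\,\opn{ReLU}\!\big(\w\cdot\x+(b-t)\big)\,\mu(t)\,dt\,d\rho(a,b,\w),
\end{equation}
which exhibits $f$ as a ReLU-Barron expansion against the pushforward measure $\tilde\rho$ on $(a,b',\w)$ with $b'=b-t$ and $t\sim\mu$. Since $a$ and $\w$ are unchanged and $\mu$ is a probability measure,
\begin{equation}
\varphi(\tilde\rho)=\int|a|\|\w\|_1\,d\tilde\rho=\int|a|\|\w\|_1\Big(\int\mu(t)\,dt\Big)d\rho=\varphi(\rho).
\end{equation}
Taking the infimum over admissible $\rho$ yields $\Bnorm{f}^{[\opn{ReLU}]}\le\Bnorm{f}^{[\opn{softplus}]}$.

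For the antisymmetric statement, I would invoke \cref{ABequiv}: for any $g$ with $\AS g=f$ the inequality just proved gives $\Bnorm{g}^{[\opn{ReLU}]}\le\Bnorm{g}^{[\opn{softplus}]}$, and taking the infimum over such $g$ delivers $\ABnorm{f}^{[\opn{ReLU}]}\le\ABnorm{f}^{[\opn{softplus}]}$. The only step requiring care is guessing the correct mollifier $\mu$ and verifying the convolution identity; once that is in hand, the remainder is routine bookkeeping and a Fubini application whose integrability is already encoded in the finiteness of $\varphi(\rho)$.
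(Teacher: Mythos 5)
Your proof is correct and follows essentially the same route as the paper: both identify the mollifier as $\opn{softplus}''=\sigma(1-\sigma)$ (the logistic sigmoid's derivative), realize softplus as the convolution $\relu*\mu$, and use that translating $b$ leaves $\varphi(\rho)=\int|a|\|\w\|_1\,d\rho$ unchanged. Your version is in fact slightly more complete, since you pin down the affine ambiguity in the convolution identity via the asymptotics and spell out the Fubini/pushforward bookkeeping and the reduction of the anti-symmetric case to \cref{ABequiv}, all of which the paper leaves implicit.
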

\begin{proof}
It suffices to show that $\softplus$ is in the closed convex hull of translates of $\relu$. That is, it suffices to write it as a convolution
\begin{equation}
\relu*\phi=\softplus
\end{equation}
for some probability distribution $\psi$ on $\RR$. But we can solve for $\phi$ by differentiating twice:
\begin{align}
\relu'*\phi&=\softplus'=\sigma,\\
\phi=\delta*\phi=\relu''*\phi&=\sigma'=\sigma(1-\sigma).
\end{align}
where $\sigma$ is the logistic sigmoid function. clearly $\phi=\sigma(1-\sigma)$ is positive and $\int\phi=\int\sigma'=1$, so $\softplus$ is in the closed convex hull of translates of $\relu$, 
\end{proof}

\begin{remark}\label{rem:softplussharp}
\textup{Conversely to \cref{eitherac} softplus can approximate ReLU by using weights $a/t,tb,t\w$ with $t\to\infty$ without changing $\varphi(\rho)$. }
\end{remark}

To numerically estimate the Barron norm of a function we make use of \cref{eitherac} and define
\begin{equation}
f_{\a,\b,W}(x)=\sum_{k=1}^ma_k\softplus(\w^{(k)}\cdot x+b_k),
\end{equation}
where $\w^{(k)}\in\mathbb R^{nd}$ and $b_k,a_k\in\mathbb R$ for each $k=1,\ldots,m$. The smoother activation function allows us to use fewer neurons to approximate smooth features, and is justified by \cref{eitherac}. Sharper features can be approximated as in \cref{rem:softplussharp} without changing the norm estimate. We define the corresponding anti-symmetric Ansatz
\begin{equation}
\psi_{\a,\b,W}=\AS f_{\a,\b,W}.
\label{NN}\end{equation}
For functions of this form we have
%\begin{equation}
\begin{align*}
\ABnorm{\psi_{\a,\b,W}}\le\Bnorm{\AS f_{\a,\b,W}}\le&\tilde\varphi(\a,\b,W),\\
\tilde\varphi(\a,\b,W):=&\sum_{k=1}^m|a_k|(\|\w^{(k)}\|_1+b_k).
\end{align*}
We then minimize over $a,b,W\in\RR^{m}\times\RR^{m}\times\RR^{mnd}$ to estimate $\ABnorm{\psi_{\a,\b,W}}$.
Given a target function $\psi$ we define its $\epsilon$-smooth anti-symmetric Barron norm $\ABnorm{\psi}^{(\epsilon)}=\inf\{\ABnorm{\psi'}\:|\:\|\psi'-\psi\|^2\le\epsilon\}$. To estimate $\ABnorm{\psi}^{(\epsilon)}$ we then implement a supervised (SGD) learning procedure for $\psi_{\a,\b,W}$ with a loss function $L$ consisting of two penalties:%\LL{ typo below?} 
\[L(\psi')=\tau_\epsilon(\|\psi_{\a,\b,W}-\psi\|^2)+\lambda\varphi(\a,\b,W),\]
where $\lambda$ is a small constant, and $\tau_\epsilon(y)=\max\{y-\epsilon,0\}$. After the first penalty has converged we then use the optimal value of $\varphi(\a,\b,W)$ as our estimate of $\ABnorm{\psi}^{(\epsilon)}$. %(\cref{fig:B_eps}).

%\begin{figure}[h]
%    \centering
%    \includegraphics[width=.9\textwidth]{}\\
%    \includegraphics[width=.8\textwidth]{}
%    \caption{
%    Training process to estimate the anti-symmetric %\LL{ anti-symmetric?}  
%    Barron norm. A neural network Ansatz with one hidden layer is trained to approximate the function $\psi$ using the $L^2$ loss (top) with a small penalty added corresponding to the norm $\varphi(\a,\b,W)$ on the weights.}\label{fig:B_eps}
%\end{figure}

\section{Conclusion}
We have shown that anti-symmetric functions in the Barron space can be efficiently approximated by determinant-based neural network architectures, and the number of determinants depends on the anti-symmetric Barron norm of the function. Compared to existing bounds for neural network approximations, we obtain a factorially improved error bound. Our result illustrates the importance of choosing an Ansatz which reflects the known symmetries of the problem. It is an open question whether the anti-symmetric Barron norm is a useful characterization of certain challenging quantum states in practice. 

\section*{Acknowledgment}
 (N. A.) was supported by the NSF Quantum Leap Challenge Institute (QLCI) program through grant number OMA-2016245 and by the Simons Foundation under Award No. 825053. This material is also based upon work supported by the U.S. Department of Energy, Office of Science, Office of Advanced Scientific Computing Research and Office of Basic Energy Sciences, Scientific Discovery through Advanced Computing (SciDAC) program (L.L.).  L.L. is a Simons Investigator.

%%%%%%%%%%%%%%%%%%%%%%%%%%%%%%%%%%%%%%%%%%%%%%%%%%%%%%%%%%%%%%%%%%%%%%%%%%%%%%%%%%%%%%%%%%%%%%%%%%%%

%\bibliography{na_ref_antisym,lin_ref}

\bibliographystyle{elsarticle-num}
\bibliography{refs}

%\onecolumn
\appendix

\section{Omitted proofs}
\label{sec:otherproofs}
\begin{proof}[Proof of \cref{ABequiv}]
Let $\PBnorm{\psi}=\inf\{\Bnorm{f}|\AP f=\psi\}$. Then,
\[
\begin{aligned}
\ABnorm{\psi}
&=\inf\{\:\:\Bnorm{f}\:\:|\:\AS f=\psi\}
\\&=\inf\{\:\:\Bnorm{f}\:\:|\:\AP(\sqrt{n!}f)=\psi\}
\\&=\inf\{\:\:\Bnorm{\tfrac1{\sqrt{n!}}g}\:\:|\:\AP(g)=\psi\}
=\tfrac1{\sqrt{n!}}\PBnorm{\psi}.
\end{aligned}
\]
So it suffices to show that $\PBnorm{\psi}=\Bnorm{\psi}$ for any anti-symmetric function $\psi$.

    $\PBnorm{\psi}\le\Bnorm{\psi}$ holds because the infimum $\PBnorm{\psi}=\inf\{\Bnorm{f}|\AP f=\psi\}$ over $f$ includes $f=\psi$.
    
    $\PBnorm{\psi}\ge\Bnorm{\psi}$. To show this, fix $\psi$ and let $f_\rho$ be such that $\AP{f_\rho}=\psi$ and $\varphi(\rho)\to\PBnorm{\psi}$. We need a representation of $\AP f_\rho$ as $f_{\rho'}$ for some measure $\rho'$ to bound its raw Barron norm. But indeed, $\psi=\AP f_\rho=f_{\rho'}$ where
    \[\rho'=\frac1{n!}\sum_{\pi\in S_n}\rho_\pi\quad\rho_\pi(a,b,\w)=\rho((-1)^{\pi}a,b,\pi(\w)).\]
    $\varphi(\rho_\pi)=\varphi(\rho)$ for each $\pi$, so the same holds for $\rho'$. Now $\Bnorm{\psi}\le\varphi(\rho')=\varphi(\rho)\to\PBnorm{\psi}$ which proves the inequality.

\end{proof}

\subsection{The Fourier inversion formula for ReLU}
\label{invformula}
%\NA{define lp and hp again}
We verify \cref{Fdef} for the ReLU activation with $\widehat\activation$ given by \cref{Frelu}. \cref{Fdef} claims that $\LP\activation\gamma=p_\gamma+O(\gamma g)$ where $p$ is a low-degree polynomial and $g$ is bounded by a polynomial. Here we have defined $\LP\activation\gamma=\activation-\HP\activation\gamma$.
%\begin{enumerate}
   % \item $\activation=\opn{ReLU}$: 
    We first evaluate the high-pass part
\[\HP\activation{\gamma}(y):=
\frac1{\sqrt{2\pi}}\int_{|\theta|>\gamma}\hat\sigma(\theta)e^{i\theta y}d\theta
=|y|/2-\frac{\cos(\gamma y)}{\pi \gamma }-\frac{y\opn{Si}(\gamma y)}\pi,\]
where $\opn{Si}(y)=\int_0^y\frac{\sin s}sds$. Since $\opn{ReLU}(y)=|y|/2+y/2$,
\begin{align}
\LP\activation{\gamma }(y):=\activation(y)-\HP\activation\gamma(y)
&=y/2+\frac{\cos(\gamma y)}{\pi \gamma }+\frac{y\opn{Si}(\gamma y)}\pi.
\end{align}
Write $\LP\activation\gamma=p_\gamma+\varepsilon$ where
\[p_\gamma(y)=y/2+\frac1{\pi\gamma}.\]
Then the remainder satisfies
\begin{equation}\label{boundlp}
|\varepsilon|\le|\frac{\cos(\gamma y)-1}{\pi \gamma }|+|\frac{y\opn{Si}(\gamma y)}\pi|\le\frac{(\gamma y)^2}{2\pi \gamma }+\frac{y\cdot(\gamma y)}{\pi}=\gamma g(y),\qquad g(y):=\frac{3 }{2\pi}y^2.\end{equation}

\subsection{Infra-red estimate}
\label{ests}

\begin{proof}[Proof of \cref{AAbound}]
By \cref{Frelu} we have $|\widehat\relu(\theta)|=\frac1{\sqrt{2\pi}\theta^2}$ for $|\theta|>0$, so \cref{limeps} implies
\begin{equation}\label{intwp}
\Xnorm{A_{\w,b;\gamma}-A_{\w,b}}\vphantom{\frac11}\le
\frac1{2\pi}\int_{|\theta|<\gamma}\frac{\Xnorm{\aa_{\theta\w}}}{\theta^2}d\theta,
\end{equation}
Now apply \cref{Dttlowbound} to get the bound
\begin{equation}\label{sep}
\Xnorm{\aa_{\theta\w}}^2\le\Big(\frac{|\theta|}{2\gamma}\Big)^2\Big(\frac{|\theta|}{2\gamma}\Big)^{\Omega(n^{1+1/d})}\quad \text{for }|\theta|\le\gamma.
\end{equation}
Here we have taken out two powers of $\theta/(2\gamma)$ in order to cancel the $1/\theta^2$ in \cref{intwp}. Rearranging \cref{sep} yields
\begin{equation}\label{loose}
\frac{\Xnorm{\aa_{\theta\w}}}{\theta^2}\le\gamma^2\Big(\frac{|\theta|}{2\gamma}\Big)^{\Omega(n^{1+1/d})}\le\gamma^2 2^{-\Omega(n^{1+1/d})}.
\end{equation}
Now substitute \cref{loose} into \cref{intwp} and use $\gamma=O(1)$ to obtain the result.
\end{proof}

\end{document}